\documentclass{amsart}
\usepackage[english]{babel}
\usepackage{amsfonts}
\usepackage{amssymb}
\usepackage{amsmath,mathrsfs}
\usepackage{amsthm}
\usepackage{graphicx}

\usepackage{todonotes}

\def\bN {\mathbf{N}}

\def\bR {\mathbf{R}}
\def\bS {\mathbf{S}}

\def\bZ {\mathbf{Z}}

\def\fH {\mathfrak{H}}

\def\cF {\mathcal{F}}
\def\cG {\mathcal{G}}

\def\cQ {\mathcal{Q}}

\def\cX {\mathcal{X}}

\def\la {\langle}
\def\ra {\rangle}

\def\rstr{\big|}

\newcommand{\Div}{\operatorname{div}}

\newcommand{\Diam}{\operatorname{diam}}
\newcommand{\Diff}{\operatorname{Diff}}

\newcommand{\Det}{\operatorname{det}}
\newcommand{\Tr}{\operatorname{trace}}

\newcommand{\Dist}{\operatorname{dist}}

\newcommand{\Id}{\operatorname{Id}}
\newcommand{\id}{\operatorname{id}}

\newcommand{\ba}{\begin{aligned}}
\newcommand{\ea}{\end{aligned}}

\newcommand{\be}{\begin{equation}}
\newcommand{\ee}{\end{equation}}

\def\scrL{\mathscr{L}}

\def\a {{\alpha}}
\def\b {{\beta}}
\def\g {{\gamma}}
\def\Ga {{\Gamma}}
\def\de {{\delta}}
\def\eps {{\epsilon}}
\def\th {{\theta}}

\def\l {{\lambda}}
\def\L {{\Lambda}}

\def\om {{\omega}}
\def\Om {{\Omega}}

\def\d {{\partial}}
\def\grad {{\nabla}}

\newcommand{\mc}{\mathcal}
\newcommand{\pt}{\partial}

\newcommand{\br}{\mathbf{R}}

\renewcommand{\om}{\omega}
\newcommand{\te}{\theta}

\renewcommand{\eps}{\varepsilon}

\renewcommand{\r}{\rho}

\renewcommand{\[}{\left[}
\renewcommand{\]}{\right]}

\newtheorem{thm}{Theorem}
\newtheorem{lem}[thm]{Lemma}

\def\be{\begin{equation}}
\def\ee{\end{equation}}
\def\bea{\begin{eqnarray}}
\def\eea{\end{eqnarray}}
\def\lb{\label}
\def\wto{\rightharpoonup}

\numberwithin{thm}{section}
\numberwithin{equation}{section}
\numberwithin{figure}{section}


\begin{document}

\title[Quantization of 2D measures]{Quantization of measures and gradient flows: \\a perturbative approach in the \\$2$-dimensional case.}

\author[E. Caglioti]{Emanuele Caglioti}
\address[E.C.]{Sapienza Universit\`a di Roma, Dipartimento di Matematica Guido Castelnuovo, Piazzale Aldo Moro 5, 00185 Roma, Italy}
\email{francois.golse@polytechnique.edu}

\author[F. Golse]{Fran\c cois Golse}
\address[F.G.]{CMLS, \'Ecole polytechnique, CNRS, Universit\'e Paris-Saclay , 91128 Palaiseau Cedex, France}
\email{caglioti@mat.uniroma1.it}

\author[M. Iacobelli]{Mikaela Iacobelli}
\address[M.I.]{University of Cambridge, DPMMS Centre for Mathematical Sciences, Wilberforce road, Cambridge CB3 0WB, United Kingdom}
\email{iacobelli@maths.cam.ac.uk}

\begin{abstract}
In this paper we study a perturbative approach to the problem of quantization of measures in the plane. Motivated by the fact that, as the number of points tends to infinity, hexagonal lattices are asymptotically optimal from an energetic point 
of view \cite{FT,Gr,M}, we consider configurations that are small perturbations of the hexagonal lattice and we show that: (1) in the limit as the number of points tends to infinity, the hexagonal lattice is a strict minimizer of the energy; (2) the 
gradient flow of the limiting functional allows us to evolve any perturbed configuration to the optimal one exponentially fast. In particular, our analysis provides a solid mathematical justification of the asymptotic optimality of the hexagonal 
lattice among its nearby configurations.
\end{abstract}

\date{\today}

\maketitle




\section{Introduction}


The term \emph{quantization} refers to the process of finding an \emph{optimal} approximation of a $d$-dimensional probability density by a convex combination of a finite number $N$ of Dirac masses. The quality of such an approximation is 
measured in terms of the Monge-Kantorovich or Wasserstein metric.

The need of such approximations first arose in the context of information theory in the early 1950s. The idea was to see the quantized measure as the digitalization of an analog signal which should be stored on a data storage medium or transmitted 
via a channel \cite{BW, GG}. Another classical application of the quantization problem concerns numerical integration, where integrals with respect to certain probability measures needs to be replaced by integrals with respect to a good discrete 
approximation of the original measure \cite{GPP}. For instance, quasi-Monte Carlo methods use low discrepancy sequences, and the notion of discrepancy can be regarded as one approach to the quantization problem. See \cite{Caflisch} for an
introduction to this subject, especially section 5 for a presentation of the notion of discrepancy of a sequence, and section 7 for applications in the context of rarefied gas dynamics. Moreover, this problem has applications in cluster analysis, pattern 
recognition, speech recognition, stochastic processes (sampling design) and mathematical models in economics (optimal location of service centers). For a detailed exposition and a complete list of references see the monograph \cite{CGI}. 

We now introduce the theoretical setup of the problem. Given $r\ge 1$, consider $\r$ a probability density on an open set $\Omega \subset \br^d$ with finite $r$-th order moment,
$$
\int_{\Omega}|y|^r \rho(y)dy<\infty.
$$
Given $N$ points $x^{1}, \ldots, x^{N} \in \Omega,$ we seek the best approximation of $\rho,$ in the sense of Monge-Kantorovich, by a convex combination of Dirac masses centered at $x^{1}, \ldots, x^{N}.$ Hence one minimizes
$$
\inf \bigg\{ MK_r\bigg(\sum_im_i \delta_{x^i}, \r(y)dy\bigg)\,:\, m_1, \ldots, m_N\ge0, \  \sum_{i=1}^Nm_i=1\bigg\},
$$
with
$$
MK_r(\mu, \nu):=\inf\bigg\{\int_{\Omega\times\Omega}|x-y|^rd\gamma(x,y)\,:\, \pi \in \Pi(\mu,\nu)\bigg\},
$$
where $\Pi(\mu,\nu)$ is the set of all Borel probability measures on $\Omega \times \Omega$ whose marginals  onto the first and second component are given by $\mu$ and $\nu$ respectively. In other words, a Borel probability $\pi$ measure on 
$\Omega \times \Omega$ belongs to $\Pi(\mu,\nu)$ if
$$
\iint_{\Omega\times\Omega}(\phi(x)+\psi(y))\pi(dxdy)=\int_\Omega\phi(x)\mu(dx)+\int_\Omega\psi(y)\nu(dy)
$$
for each $\phi,\psi\in C_c(\Omega)$ (see \cite{AGS,Vil03} for more details on the Monge-Kantorovitch distance between probability measures).

As shown in \cite{GL}, the following facts hold:
\begin{enumerate}
\item
The best choice of the masses $m_i$ is given by
$$
m_i:= \int_{V(x^i|\{x^{1}, \ldots, x^{N}\} )} \r(y)dy,
$$
where 
$$
V(x^i|\{x^{1}, \ldots, x^{N}\}):= \{y \in \Omega\ : \ |y-x^i| \le |y-x^{j}|,\  j \in 1, \ldots, N \}
$$ 
is the so called  \emph{Voronoi cell} of $x^i$ in the set $x^1, \ldots, x^N.$

\item The following identity holds:
\begin{multline*}
\inf \bigg\{ MK_r\bigg(\sum_im_i \delta_{x^i}, \r(y)dy\bigg)\,:\, m_1, \ldots, m_N\ge0, \  \sum_{i=1}^N m_i=1\bigg\}
\\
=\cQ_{N,r} (x^{1}, \ldots, x^{N}) ,
\end{multline*}
where
 $$
\cQ_{N,r} (x^{1}, \ldots, x^{N}) := \int_\Omega \underset{1\le i \le N}{\mbox{min}} | x^i-y |^r\rho(y)dy.
$$
\end{enumerate}

Assume that the points $x^{1}, \ldots, x^{N}$ are chosen in an optimal way so as to minimize the functional $\cQ_{N,r}: (\br^d)^N \to \br^+$; then in the limit as $N$ tends to infinity these points distribute themselves accordingly to a probability 
density proportional to $\rho^{d/{d+r}}.$ In other words, by \cite[Chapter 2, Theorem 7.5]{GL} one has
\be\label{flow1:close}
\frac{1}{N}\sum_{i=1}^N \delta_{x^i}\wto\frac{\rho^{d/{d+r}}(x)dx}{\displaystyle\int_\Omega \rho^{d/{d+r}}(y)dy}
\ee
weakly in the sense of Borel probability measures on $\Omega$ as $N\to\infty$.

These issues are relatively well understood from the point of view of the calculus of variations \cite[Chapter 1, Chapter 2]{GL}. Moreover, in \cite{CGI} we considered a gradient flow approach to this problem in dimension $1$. Now we will explain 
the heuristic of the dynamical approach and the main difficulties in extending our result to higher dimensions.


\subsection{A dynamical approach to the quantization problem}


Given $N$ points $x^{1}_0, \ldots, x^{N}_0$, we consider their evolution  under the gradient flow generated by $\cQ_{N,r}$, that is, we solve the system of ODEs in $(\br^d)^N$
\be
\label{flow1:eq:ODEintro}
\left\{
\begin{array}{rl}
\bigl(\,\dot x^{1}(t),\ldots,\dot x^{N}(t)\,\bigr)&=-\nabla \cQ_{N,r}\bigl(x^{1}(t),\ldots,x^{N}(t)\bigr),\\
\bigl(x^{1}(0),\ldots,x^{N}(0)\bigr)&=(x^{1}_0, \ldots, x^{N}_0).
\end{array}
\right.
\ee

As usual in gradient flow theory, as $t$ tends to infinity one expects that the points $\bigl(x^{1}(t),\ldots,x^{N}(t)\bigr)$ converge to a minimizer $(\bar x^{1},\ldots,\bar x^N)$ of $\cQ_{N,r}.$ Hence, in view of \eqref{flow1:close}, the empirical measure 
$$
\frac{1}{N}\sum_{i=1}^N \delta_{\bar x^i}
$$
is expected to converge weakly in the sense of probability measures to 
$$
\frac{\rho^{d/{d+r}}}{\displaystyle\int_\Omega \rho^{d/{d+r}}(y)dy}dx
$$ 
as $N \to \infty$.

Our approach of this problem involves exchanging the limits as $t\to \infty$ and $N\to\infty$. More precisely, we first pass to the limit in the ODE above as $N\to\infty$, and take the limit in the resulting PDE as $t\to+\infty$. For this, we take a set of 
reference points $(\hat x^{1},\ldots,\hat x^N)$ and we parameterize a general family of $N$ points $x^i$ as the image of $\hat x^i$ via a slowly varying smooth map $X:\br^d\to \br^d$, that is
 $$
 x^i=X(\hat x^i).
 $$
In this way, the functional $\cQ_{N,r}(x^{1},\ldots,x^{N})$ can be rewritten in terms of the map $X$ and (a suitable renormalization of it) should converge to a functional $\mathcal F[X]$. Hence, we can expect that the evolution of $x^i(t)$ for $N$ large
is well-approximated by the $L^2$-gradient flow of $\mathcal F$.

Although this formal argument may sound convincing, already the 1-dimensional case is rather delicate. We briefly review the results of \cite{CGI} below.


\subsection{The 1D case}


Without loss of generality let $\Omega$ be the open interval $(0,1)$, and consider $\r$ a smooth probability density on $\Omega.$ In order to obtain a continuous version of the functional
$$
\cQ_{N,r}(x^{1}, \ldots, x^{N})=\int_{0}^1\underset{1\le i \le N}{\mbox{min}} | x^i-y |^r\rho(y)\,dy,
$$ 
with $0\le x^{1}\le \ldots \le x^{N}\leq 1$, assume that
$$
x^i=X\bigg(\frac{i-1/2}{N}\bigg), \qquad i=1, \ldots, N
$$
with $X: [0,1] \to [0,1]$ a smooth non-decreasing map such that $X(0)=0$ and $X(1)=1$. Then,
$$
N^r\cQ_{N,r}(x^{1}, \ldots, x^{N}) \longrightarrow C_r\int_0^1 \r(X(\te))|\pt_\te X(\te)|^{r+1}d\te:=\mc{F}[X]
$$
as $N\to \infty,$ where $C_r:=\frac{1}{2^r(r+1)}.$

By a standard computation, we obtain the gradient flow PDE for $\mc{F}$ for the $L^2$-metric,
\begin{multline}\label{flow1:gradient flow}
\pt_tX(t,\te)=C_r\Big((r+1)\pt_\te\big(\r(X(t,\te))|\pt_\te X(t,\te)|^{r-1}\pt_\te X(t,\te)\big)
\\
-\r'(X(t,\te))|\pt_\te X(t,\te)|^{r+1} \Big),
\end{multline}
coupled with the Dirichlet boundary condition
\begin{equation}
\label{flow1:eq:boundary}
X(t,0)=0, \qquad X(t,1)=1.
\end{equation}

Our main result in \cite{CGI} shows that, provided that $r=2,$ that $\|\rho-1\|_{W^{2,\infty}(0,1)} \ll 1,$ and  that the initial datum is smooth and increasing, the discrete and the continuous gradient flows remain {\it uniformly} close in $L^2$ for {\it all} 
times. In addition, by entropy-dissipation inequalities for the PDE, we show that the continuous gradient flow converge exponentially fast to the stationary state for the PDE, which is seen in Eulerian variables to correspond to the measure 
$$
\frac{\rho^{1/3}(x)dx}{\displaystyle\int_0^1\rho^{1/3}(y)dy}
$$
as predicted by \eqref{flow1:close}.


\subsection{The 2D case: setting and main result}


Our goal is to extend the result above to higher dimensions. As a first step, it is natural to consider the quantization problem for the uniform measure in space dimension $2$. The main advantage in this situation is that optimal configurations are 
known to be asymptotically hexagonal lattices \cite{FT,Gr,M}. (Notice however that the reference \cite{M} considers the $2$-dimensional quantization problem in the Monge-Kantorovich distance of exponent $1$, i.e. with $r=1$, at variance with 
our approach in the present paper which assumes $r=2$.) Hence, it will be natural to use the vertices of the optimal, hexagonal lattice as reference points $\hat x^i$, and to assume that the time-dependent configuration of points are obtained as 
slowly varying deformations of the optimal configuration.

More precisely, we shall consider the following setting. Let us consider a regular hexagonal tessellation of the Euclidean plane $\bR^2$. Up to some inessential displacement, one can choose the centers of the hexagons to be the vertices of the 
regular lattice
$$
\scrL:=\bZ e_1\oplus\bZ e_2\qquad\hbox{ where }e_1=(1,0)\hbox{ and }e_2=\left(\tfrac12,\tfrac{\sqrt{3}}2\right).
$$ 
Let $\Pi$ be the fundamental domain of $\bR^2/\scrL$ centered at the origin defined as follows:
$$
\Pi:=\{x_1e_1+x_2e_2\,:\,|x_1|,|x_2|\le\tfrac12\}\,.
$$
Henceforth, we consider the sequence of scaled lattices homothetic to $\scrL$, of the form $\eps\scrL$ with $\eps=1/n$ and $n\in\bN^*$. 

The slowly varying deformations of $\scrL$ used in this work are discrete sets of the form $\cX_\eps:=X(\eps\scrL)$ with $\eps=1/n$ and $n\in\bN^*$, where $X\in\Diff^1(\mathbf{R}^2)$, i.e. $X$ is a $C^1$-diffeomorphism of $\bR^2$ onto itself.
We shall assume that $X$ satisfies the following properties:

\smallskip
\noindent
(a) $X$ is a periodic perturbation of the identity map, i.e.
$$
X(x+l)=X(x)+l\quad\hbox{ for all }x\in\bR^2\hbox{  and }l\in\scrL\,;
$$
(b) $X$ is $C^1$-close to the identity map, i.e.,
$$
\|X-\mbox{id}\|_{W^{1,\infty}}<\eta\quad\hbox{where }\eta\ll 1\,.
$$ 
(c) $X$ is centered at the origin, i.e.
$$
\int_\Pi X(x)dx=0\,.
$$
(This last condition does not restrict the generality of our approach: if $X$ fails to satisfy property (c), set 
$$
\la X\ra:=\int_\Pi X(x)dx=\int_\Pi(X(x)-x)dx\,;
$$
then $\la X\ra<\eta$ and $\hat X(x)=X(x)-\la X\ra$ satisfies properties (a) and (c), and property (b) where $\eta$ is replaced by $2\eta$.)

\smallskip
If $\eta$ is sufficiently small, then for every $x\in\cX_\eps$ the Voronoi cell $V(x|\cX_\eps)$ centered in $x$ with respect to the set of points $\cX_\eps$ is an hexagon. Indeed, the angle between two adjacent edges starting from any vertex in the 
deformed configuration $\cX_\eps$ is $\pi/3+O(\eta)$ by the mean value theorem. Hence the center of the circumscribed cercle to any triangle with nearest neighbor vertices in the deformed configuration lies in the interior of this triangle, and 
ethe perpendicular bissectors of the edges of the triangle intersect at the center of the circumscribed circle with an angle $2\pi/3+O(\eta)$. Hence the family of all such centers are the vertices of an (irregular) hexagonal tessellation of the plane, 
which is the Voronoi tessellation of the deformed configuration.

\medskip
To avoid all difficulties pertaining to boundary conditions, we formulate our quantization problem in the ergodic setting. In other words, we consider the discrete quantization functional averaged over disks with radius $L\gg 1$:
$$
\cG_{\eps,L}(\cX_\eps):=\int_{B(0,L)}\Dist(y,\cX_\eps)^2dy\,.
$$
Our first main result describes the asymptotic behavior of $\cG_{\eps,L}$ as follows.

\begin{thm}\lb{T-LimG}
Assume that $X\in\Diff^1(\mathbf{R}^2)$ satisfies the properties (a-c) above, and that $\eta$ is small enough. For each $\eps=1/n$ with $n\in\bN^*$
$$
\frac1{\pi L^2}\cG_{\eps,L}(\cX_\eps)\to\int_{X(\Pi)}\Dist(y,\cX_\eps)^2dy\quad\hbox{ as }L\to\infty\,.
$$
Moreover
$$
\int_{X(\Pi)}\Dist(y,\cX_\eps)^2dy\sim\eps^2\cF(X)\quad\hbox{ as }\eps\to 0,
$$
where $\cF$ is given by
$$
\cF(X):=\int_\Pi F(\grad X(x))dx\,.
$$
In this expression, the function $F$ is defined by the formula
$$
F(M):=\tfrac1{48}\sum_{\om\in\{e_1,e_2,e_1-e_2\}}|M\om|^4\Phi(\om,M)(3+\Phi(\om,M)^2)\,,
$$
where
$$
\Phi(e,M):=\sqrt{\frac{|MRe|^2|MR^Te|^2}{\tfrac34|\Det(M)|^2}-1}
$$
for each invertible $2\times 2$-matrix $M$ with real entries and each unit vector $e$, and where $R$ designates the rotation of an angle $\tfrac{\pi}3$ centered at the origin.
\end{thm}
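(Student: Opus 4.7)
The plan splits into three parts: an ergodic averaging step for the limit $L\to\infty$, a local Voronoi analysis combined with a Taylor expansion for the limit $\eps\to 0$, and a geometric computation identifying $F(M)$. For the first, I would use that property (a) forces $\cX_\eps$, and hence the function $y\mapsto\Dist(y,\cX_\eps)^2$, to be $\scrL$-periodic. Since $X$ is $\scrL$-equivariant and a $C^1$-diffeomorphism, $X(\Pi)$ is a fundamental domain of $\bR^2/\scrL$, and the standard averaging of a periodic locally bounded function over large balls converts $(\pi L^2)^{-1}\int_{B(0,L)}$ into $|X(\Pi)|^{-1}\int_{X(\Pi)}$ as $L\to\infty$; this is the first assertion (modulo the normalizing constant $|X(\Pi)|=|\Pi|$ coming from the common measure of all fundamental domains of $\scrL$).

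For the second step, I would partition $X(\Pi)$, up to boundary terms of negligible measure (also handled by periodicity), into Voronoi cells centered at the $\eps^{-2}|\Pi|+O(\eps^{-1})$ points $x^i:=X(\eps k^i)$ with $\eps k^i\in\Pi$:
$$
\int_{X(\Pi)}\Dist(y,\cX_\eps)^2 dy=\sum_{i}\int_{V(x^i|\cX_\eps)}|y-x^i|^2 dy.
$$
Setting $M_i:=\grad X(\eps k^i)$, the Taylor expansion $X(\eps(k^i+e))-x^i=\eps M_i e+O(\eps^2)$ (for every hexagonal neighbor $e\in\{\pm e_1,\pm e_2,\pm(e_1-e_2)\}$), combined with the rescaling $z=(y-x^i)/\eps$, reduces the $i$-th summand to $\eps^4\int_{V_i^\eps}|z|^2 dz$, where $V_i^\eps$ is the Voronoi cell of $0$ in an $O(\eps)$-perturbation of $M_i\scrL$ and hence at Hausdorff distance $O(\eps)$ from the reference hexagon $V_{M_i}:=V(0|M_i\scrL)$. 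The sum therefore equals $\eps^2\bigl[\eps^2\sum_i F(M_i)\bigr]+O(\eps^3)$ with $F(M):=\int_{V_M}|z|^2 dz$, and the bracketed expression is a Riemann sum of $x\mapsto F(\grad X(x))$ on $\Pi$, converging to $\cF(X)$ as $\eps\to 0$.

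The third step is a direct geometric computation. The hexagon $V_M$ has, adjacent to each neighbor direction $\om$, two vertices given by the circumcenters of the triangles $(0,M\om,MR\om)$ and $(0,M\om,MR^T\om)$. The identity $I-R=R^T$ yields $M\om-MR\om=MR^T\om$, and the classical circumradius formula together with $|M\om\times MR\om|=\sin(\pi/3)|\Det M|$ gives $|c|^2=|M\om|^2|MR\om|^2|MR^T\om|^2/(3|\Det M|^2)$; hence the face of $V_M$ perpendicular to $M\om$ has length $2\sqrt{|c|^2-|M\om|^2/4}=|M\om|\,\Phi(\om,M)$. Decomposing $V_M$ into six triangles with apex at $0$ and integrating $|z|^2$ on each in coordinates aligned with $M\om$ and with its orthogonal yields a contribution $|M\om|^4\Phi(\om,M)(3+\Phi(\om,M)^2)/96$ per triangle; pairing antipodal triangles (which share $\om$) and summing over $\om\in\{e_1,e_2,e_1-e_2\}$ recovers the stated formula.

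The hard part will be the uniform-in-$i$ control of the Voronoi perturbation in the second step: property (b) must be exploited to ensure that every $V(x^i|\cX_\eps)$ is a hexagon inheriting its combinatorics from the reference lattice (no second-nearest neighbor in $\cX_\eps$ intrudes upon the cell), and that the estimate $V_i^\eps\to V_{M_i}$ is $O(\eps)$ uniformly in $i$, including for cells whose centers $\eps k^i$ lie near $\partial\Pi$. The quantitative angle estimate sketched in the introduction just after property (c) is the starting point; once it is promoted to a uniform bound and combined with the continuity of $M\mapsto F(M)$ on a neighborhood of the identity in invertible $2\times 2$ matrices, the Riemann-sum convergence and the final asymptotic identity follow.
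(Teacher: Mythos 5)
Your proposal is correct and follows essentially the same route as the paper: $\scrL$-periodicity of $y\mapsto\Dist(y,\cX_\eps)$ and a fundamental-domain counting argument for the $L\to\infty$ limit, a Taylor expansion of the neighbor positions together with a Riemann-sum argument for the $\eps\to 0$ asymptotics, and the circumcenter/triangle decomposition of the hexagonal cell for the explicit formula for $F(M)$. The one difference is purely organizational — you define $F(M)$ upfront as the second moment $\int_{V(0\mid M\scrL)}|z|^2\,dz$ and compute it in isolation, whereas the paper first writes the cell contribution as an explicit function of the six discrete neighbor positions and then Taylor-expands; you also correctly flag the normalization $|X(\Pi)|=|\Pi|$ and the uniformity of the hexagonal combinatorics of the Voronoi cells under property (b), both of which the paper handles (the latter via the angle estimate sketched after property (c), the former via Lemma \ref{L-DetJac}).
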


\smallskip
The proof of Theorem \ref{T-LimG} occupies sections \ref{S2} and \ref{S3} below.

\smallskip
Observe that the integrand in the definition of $\cF$ depends exclusively on the gradient of the map $\grad X$. In particular, the integrand in $\cF$ involves the Jacobian determinant $\Det(\grad X)$ of the deformation map. However
the dependence of $\cF$ on $\Det(\grad X)$ becomes singular as $\Det(\grad X)\to 0$. Therefore, our analysis is restricted to small perturbations of the uniform, hexagonal tessellation of the plane. This is obviously consistent with the 
fact that we postulated that our configuration of points remains close to the uniform hexagonal tessellation in order to arrive at the explicit expression of $\cF$ given in Theorem \ref{T-LimG}. 

\smallskip
On the other hand, at variance with the 1D case, the limiting function $\cF$ does not depend on $X$ only through its Jacobian determinant. This seriously complicates the Eulerian formulation of the 2D case, which was relatively simple
in the 1D case, and which we used in a significant manner in our earlier work \cite{CGI}.

\smallskip
Our second result is a simplified expression for $F$ near the identity matrix.

\begin{thm}\lb{T-FnearI}
Let 
$$
S=\left(\begin{matrix} 1 & 0\\ 0&-1\end{matrix}\right)\,.
$$
There exists $0<\eta_0\ll 1$ such that, for all $M\in GL_2(\bR)$ such that $|M-I|\le\eta_0$, one has
$$
\ba
F(M)=&\tfrac1{16\sqrt{3}}\Det(M)\Tr(M^TM(2S-I))
\\
&+\tfrac1{64\sqrt{3}}\frac{\Tr(M^TM)^2\Tr(M^TMS)}{\Det(M)}
\\
&-\tfrac1{192\sqrt{3}}\frac{\Tr(M^TM)^3}{\Det(M)}-\tfrac1{48\sqrt{3}}\frac{\Tr(M^TMS)^3}{\Det(M)}\,.
\ea
$$
Moreover, for each $2\times 2$-matrix $N$ with real entries, one has
$$
\ba
48F(I+\eps N)=&\tfrac{10}{\sqrt{3}}+\tfrac{20}{\sqrt{3}}\eps\Tr(N)
\\
&+\tfrac1{\sqrt{3}}\eps^2\left(14\Det(N)+10\Tr(N)^2+3\Tr(N^TN)\right)+O(\eps^3)\,.
\ea
$$
\end{thm}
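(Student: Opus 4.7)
The key idea is to resolve the square root defining $\Phi$ so that $F(M)$ becomes a manifestly rational function of the entries of $M$ near the identity. Applying the two-dimensional Lagrange identity $|u|^2|v|^2 - (u\cdot v)^2 = (u\wedge v)^2$ with $u = MR\omega$, $v = MR^T\omega$, and using $Mu \wedge Mv = \Det(M)(u \wedge v)$, one obtains
$$|MR\omega|^2|MR^T\omega|^2 = (MR\omega \cdot MR^T\omega)^2 + \Det(M)^2(R\omega\wedge R^T\omega)^2.$$
A direct calculation yields $(R\omega\wedge R^T\omega)^2 = 3/4$ for each $\omega \in \{e_1, e_2, e_1-e_2\}$, so the argument of the defining square root is a perfect square and $\Phi(\omega, M) = 2|(R\omega)^T M^T M (R^T\omega)|/(\sqrt{3}\,|\Det(M)|)$. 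Since $R\omega\cdot R^T\omega = -1/2$ for all three $\omega$, for $M$ sufficiently close to $I$ the quantity inside the absolute value is negative and $\Det(M) > 0$, so
$$\Phi(\omega, M) = -\frac{2(R\omega)^T M^T M(R^T\omega)}{\sqrt{3}\,\Det(M)},$$
a smooth, rational expression in the entries of $M$ on a suitable neighbourhood of $I$.

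Setting $P := M^T M$ and $\alpha_\omega := (R\omega)^T P (R^T\omega)$, and expanding $\Phi(3+\Phi^2) = 3\Phi + \Phi^3$, I obtain
$$F(M) = -\frac{1}{8\sqrt{3}\,\Det(M)}\sum_\omega (\omega^T P \omega)^2 \alpha_\omega - \frac{1}{18\sqrt{3}\,\Det(M)^3}\sum_\omega (\omega^T P \omega)^2 \alpha_\omega^3.$$
To produce the claimed closed form, I would write $P$ in the standard basis with diagonal entries $p, r$ and off-diagonal entry $q$, evaluate $\omega^T P\omega$ and $\alpha_\omega$ explicitly for each of the three $\omega$, and expand both sums as polynomials in $(p, q, r)$. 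Using $\Tr(P) = p + r$, $\Tr(PS) = p - r$, and the identity $\Det(M)^2 = \Det(P) = pr - q^2$ to trade $q^2$, one rewrites these polynomials in the three invariants $\Tr(M^TM), \Tr(M^TMS), \Det(M)$, yielding the four-term expression for $F(M)$ stated in the theorem.

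Once the closed form is established, the Taylor expansion near $I$ is a routine substitution. With $M = I + \eps N$ one has $\Det(M) = 1 + \eps \Tr(N) + \eps^2 \Det(N)$ and $M^T M = I + \eps(N + N^T) + \eps^2 N^T N$, giving $\Tr(M^T M) = 2 + 2\eps\Tr(N) + \eps^2 \Tr(N^T N)$, $\Tr(M^T M S) = 2\eps\Tr(NS) + \eps^2 \Tr(N^T N S)$ (using $\Tr(S) = 0$ and $S = S^T$), and $\Tr(M^T M (2S - I)) = -2 + \eps(4\Tr(NS) - 2\Tr(N)) + O(\eps^2)$. Together with $1/\Det(M) = 1 - \eps\Tr(N) + \eps^2(\Tr(N)^2 - \Det(N)) + O(\eps^3)$, substitution into the rational formula and expansion in $\eps$ produces the stated coefficients; the $\Tr(NS)$ contributions should cancel at orders $\eps^0, \eps^1, \eps^2$, leaving only $\Tr(N), \Tr(N)^2, \Tr(N^T N), \Det(N)$ in the final expansion.

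The main obstacle is the algebraic reduction in the second paragraph: rewriting the two polynomial sums $\sum_\omega(\omega^T P\omega)^2 \alpha_\omega^k$ (for $k = 1, 3$) using only $\Tr(P), \Tr(PS), \Det(M)$. While the sums are manifestly invariant under the cyclic permutations of $(|Me_1|^2, |Me_2|^2, |M(e_1 - e_2)|^2)$ inherited from the action of $R^2$, it is a nontrivial algebraic fact that all odd powers of the off-diagonal entry $q$ cancel in the final result and that the even powers reorganise via $q^2 = (\Tr(P)^2 - \Tr(PS)^2)/4 - \Det(M)^2$ into the invariants above. Verifying this collapse by explicit polynomial expansion is tedious but deterministic; the subsequent Taylor expansion is then essentially immediate.
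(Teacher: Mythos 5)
Your argument is correct and follows the same overall strategy as the paper — resolve the square root so that $F$ becomes a manifestly rational function of $M$ near $I$, expand explicitly in the entries, and regroup into the invariants $\Tr(M^TM)$, $\Tr(M^TMS)$, $\Det(M)$ — but the first step is carried out by a cleaner device. The paper verifies directly in coordinates that the radicand in $\Phi(e_1,M_1)$ is a perfect square, and then obtains $\Phi(e_2,M_1)$ and $\Phi(e_{12},M_1)$ via the conjugation identity $\Phi(Re,M)=\Phi(e,R^TMR)$ applied to $M_2=R^TM_1R$ and $M_{12}=RM_1R^T$. Your Lagrange-identity observation produces the uniform closed form $\Phi(\omega,M)=-2(R\omega)^TM^TM(R^T\omega)/(\sqrt{3}\,\Det M)$ in one invariant step, valid simultaneously for all three $\omega$ (indeed for any unit vector $e$, since $Re$ and $R^Te$ make an angle $2\pi/3$, so $Re\cdot R^Te=-\tfrac12$ and $(Re\wedge R^Te)^2=\tfrac34$); this explains the perfect-square phenomenon rather than merely verifying it. Your coefficients $\tfrac{1}{8\sqrt3}$ and $\tfrac{1}{18\sqrt3}$ check out. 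From there the two proofs coincide: the "tedious but deterministic" polynomial reduction you defer is exactly what the paper performs, writing $F(M_1)=\tfrac{1}{96\sqrt3}P(\alpha,\beta,\gamma,\delta)/\Det(M_1)$ for an explicit sextic $P$ and then splitting $P=\tfrac12(Q_++Q_-)$ with each $Q_\pm$ expressible in the three invariants. The cancellation of odd powers of $q$ that you flag as a "nontrivial algebraic fact" is in fact forced a priori by invariance of $F$ under $M\mapsto SMS$, which permutes $\{e_1,e_2,e_{12}\}$ and sends $q\mapsto -q$ while fixing $p,r,\Det(M)$; you could make that remark to justify the collapse without computation. The Taylor step is, as you say, routine once the closed form is in hand.
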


\smallskip
The proof of Theorem \ref{T-FnearI} is given in section \ref{S5} below.

\smallskip
Our third main result bears on the basic properties of the $L^2$-gradient flow of the asmptotic quantization functional $\cF$ obtained in Theorem \ref{T-LimG}.

\begin{thm}\lb{T-Gflow}
Let $X^{in}\in\Diff(\mathbf{R}^2)$ satisfy the properties (a-c) above, together with the condition
\be\lb{b'}
\|X^{in}-\id\|_{W^{\sigma,p}(\Pi)}\le\eps_0,
\ee
with $p>2$ and $1+2/p<\sigma$. Consider the PDE defining the $L^2$-gradient flow of $\cF$:
\be\lb{GflowF}
\left\{
\ba
{}&\d_tX_j(t,x)=-\frac{\de\cF}{\de X_j(t,x)}\,,\qquad j=1,2,
\\
&X(0,x)=X^{in}(x).
\ea
\right.
\ee
The $L^2$-gradient flow of $\cF$ starting from any initial diffeomorphism $X^{in}$ satisfying the conditions above exists and is unique. In other words, the Cauchy problem (\ref{GflowF}) has a unique solution $X$ defined for all $t>0$, and 
$X(t,\cdot)$ satisfies properties (a-c) for all $t>0$. Besides, the solution $t\mapsto X(t,\cdot)$ of (\ref{GflowF}) converges exponentially fast to the identity map as $t\to+\infty$: for each $X^{in}$ satisfying (a-c) and (\ref{b'}), there exist $C,\mu>0$ 
(depending on $\eps_0$) such that
$$
\|X(t,\cdot)-\id\|_{L^\infty(\Pi)}\le Ce^{-\mu t}
$$
for all $t>0$.
\end{thm}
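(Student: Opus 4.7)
The plan is to treat (\ref{GflowF}) as a quasilinear parabolic system on the torus $\bR^2/\scrL$, linearize it at its equilibrium $\id$, and combine a short-time existence result with an exponential-decay estimate obtained from the second variation of $\cF$. Since $\cF(X)=\int_\Pi F(\grad X)\,dx$ depends on $X$ only through its gradient, the flow reads
\[
\pt_tX_j(t,x)=\sum_{k=1}^2\pt_k\bigl[F_{M_{jk}}(\grad X(t,x))\bigr],\qquad j=1,2,
\]
with $F_{M_{jk}}=\pt F/\pt M_{jk}$. Under the assumptions $p>2$ and $\sigma>1+2/p$, the Sobolev embedding $W^{\sigma,p}(\Pi)\hookrightarrow C^{1,\alpha}(\Pi)$ holds for some $\alpha>0$, so $\grad X=I+\grad(X-\id)$ stays uniformly close to $I$ whenever $\|X-\id\|_{W^{\sigma,p}}$ is small; in particular $\Det(\grad X)$ remains bounded away from $0$, and $F$ is smooth on the range of $\grad X$.

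From Theorem \ref{T-FnearI}, the Taylor expansion of $F$ at $I$ integrates to
\[
\cF(\id+Y)-\cF(\id)=\tfrac{1}{48\sqrt{3}}\int_\Pi\bigl(10(\Div Y)^2+3|\grad Y|^2\bigr)dx+O\bigl(\|\grad Y\|_{L^\infty}\|\grad Y\|_{L^2}^2\bigr),
\]
because $\int_\Pi\Div Y\,dx=0$ by periodicity and $\int_\Pi\Det(\grad Y)\,dx=0$ since $\Det(\grad Y)$ is a null Lagrangian in dimension two. Thus $\id$ is a critical point of $\cF$, and the associated Hessian $\cL Y=-\tfrac{1}{24\sqrt{3}}(3\Dlt Y+10\grad\Div Y)$ is a Lam\'e-type operator whose Fourier symbol has eigenvalues $\tfrac{|\xi|^2}{8\sqrt{3}}$ and $\tfrac{13|\xi|^2}{24\sqrt{3}}$, both strictly positive, so $\cL$ is strongly elliptic.

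Short-time existence and uniqueness for (\ref{GflowF}) in $W^{\sigma,p}(\Pi)$ then follow from standard quasilinear parabolic theory, either by a contraction argument on the mild formulation built from the analytic semigroup $e^{-t\cL}$ and the maximal $L^p$-regularity of Lam\'e-type operators on the torus, or by energy estimates for Galerkin approximations. Property (a) is built into the periodic setup; property (c) is preserved because $\cF$ is invariant under constant translations, so $\int_\Pi\pt_tX(t,x)\,dx\equiv0$, and this mean-zero constraint is essential for the Poincar\'e inequality to apply; property (b) must be propagated globally via the bootstrap below.

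Setting $Y(t):=X(t,\cdot)-\id$, the energy identity
\[
\tfrac{d}{dt}\bigl(\cF(X(t))-\cF(\id)\bigr)=-\|\pt_tX(t)\|_{L^2(\Pi)}^2
\]
combined with the coercivity of $\cL$ on the mean-zero subspace yields a constant $\lambda>0$ such that, as long as $\|\grad Y(t)\|_{L^\infty}$ is small, $\|\pt_tX(t)\|_{L^2}^2\ge\lambda\bigl(\cF(X(t))-\cF(\id)\bigr)$. This gives $\cF(X(t))-\cF(\id)\le e^{-\lambda t}(\cF(X^{in})-\cF(\id))$ and, via coercivity, exponential decay of $\|Y(t)\|_{H^1}$. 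Interpolating this low-norm decay against a uniform-in-time bound on $\|Y(t)\|_{W^{\sigma,p}}$, obtained from higher-order energy estimates exploiting the parabolic smoothing of $\cL$ and the smallness of $\|\grad Y\|_{L^\infty}$, produces exponential decay in every intermediate norm, in particular in $L^\infty$, and simultaneously keeps the solution in the regime where local existence can be iterated, hence extended to all $t>0$. The main obstacle is precisely the self-consistency of this bootstrap: because $F(M)$ becomes singular as $\Det(M)\to0$, the higher-regularity estimates and the exponential $H^1$-decay must be propagated together in order to prevent $\|\grad Y(t)\|_{L^\infty}$ from escaping the ellipticity regime in which every preceding step is valid.
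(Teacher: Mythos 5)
Your high-level reading of the problem is correct: the Hessian of $\cF$ at $\id$, after integrating away the null Lagrangian $\Det(\grad Y)$, is the Lam\'e-type operator $\cL Y=-\tfrac{1}{24\sqrt{3}}(3\Dlt Y+10\grad\Div Y)$ with the Fourier eigenvalues you compute, so $\id$ is a strict, strongly elliptic minimizer on the mean-zero subspace, and the gradient flow should relax to it exponentially. The energy identity and a Poincar\'e/\L{}ojasiewicz argument then give exponential decay of $\|Y(t)\|_{H^1}$ \emph{while} $\|\grad Y(t)\|_{L^\infty}$ stays small. All of that matches the structure of the paper.

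The gap is exactly where you flag it, and it is not a technicality that can be left as an ``obstacle.'' Your scheme needs a uniform-in-time bound on $\|\grad Y(t)\|_{L^\infty}$ to keep $\Det(\grad X)$ away from zero, but the only ingredients you offer for that are short-time existence (which gives smallness only on $[0,t_0]$) and unspecified ``higher-order energy estimates.'' Those higher-order estimates are precisely where the singularity of $F$ in $\Det(M)$ enters: without already knowing uniform smallness you cannot control the coefficients of the linearized operator, and the bootstrap is circular. The paper breaks the circle by three moves you do not make. First, it replaces $F$ by $F_0(M)=F(M)-\tfrac{5}{12\sqrt{3}}\Tr(M-I)-\tfrac{7}{24\sqrt{3}}\Det(M-I)$, removing the null Lagrangians so that $\grad^2F_0(I)$ is uniformly positive in the Frobenius norm (not just on the integrated level), and then cuts $F_0$ off to a \emph{globally} uniformly convex $G$; the gradient flow of $\cG(X)=\int_\Pi G(\grad X)\,dx$ is thus globally well-posed by standard quasilinear theory regardless of any smallness, and its $L^2$ decay \eqref{ExpDecay} follows from the global two-sided Hessian bounds on $G$, not from assumed smallness of $\grad Y$. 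Second, it upgrades this $L^2$ decay to uniform-in-time $C^{1,\beta}$ control via the A-caloric approximation lemma of Duzaar--Mingione applied on parabolic cylinders, a Campanato-type $\eps$-regularity result; this is the quantitative ingredient your ``parabolic smoothing'' invocation lacks, and it is what turns the integrated decay into the pointwise smallness needed to stay in the ellipticity regime. Third, once $\|\grad X(t)-I\|_{L^\infty}<\rho_0/2$ is secured for all $t\ge 0$, one still has to check that the flow of $\cG$ (hence of $\cF_0$) coincides with the flow of $\cF$: the subtracted terms $\Tr(M-I)$ and $\Det(M-I)$ have gradients whose divergence vanishes (the Piola identity $\Div_x\bigl(JX(\grad_xX^{-1})^T\bigr)=0$), so $\Div_x\grad F=\Div_x\grad F_0$. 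Without a substitute for these steps your argument does not close; you have correctly identified the equilibrium, the linearization, and the decay mechanism, but not the device that makes the nonlinear, singular problem tractable for all times.

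Two smaller remarks. Your observation that property (c) is preserved because $\cF$ is translation-invariant, so $\int_\Pi\pt_tX\,dx=0$, is correct and is the same as the paper's remark that the flow is in divergence form. And your use of $\int_\Pi\Det(\grad Y)\,dx=0$ is exactly Lemma~\ref{L-DetJac}; the paper uses it both to compute the second variation and, more crucially, to justify the subtraction defining $F_0$, which is the step you skip.
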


\smallskip
Since $\cF$ depends on $\Det(\grad X)$, one cannot hope that $\cF$ has any convexity property. Morevoer, since the dependence of $\cF$ in $\Det(\grad X)$ is singular, one cannot hope that some compensations would offset the lack
of convexity coming from the determinant. For this reason, we consider initial configurations that are small perturbations of the hexagonal lattices, and we study in detail the linearization at the equilibrium configuration of the system of
equations defining the gradient flow of $\cF$. Combining this with some general $\eps$-regularity theorems for parabolic systems, we prove that the nonlinear evolution is governed by the linear dynamics, and in this way we can prove 
exponential convergence to the hexagonal (equilibrium) configuration. As we shall see, our proof of Theorem \ref{T-Gflow}, which occupies section \ref{S6} below, is based on tools coming from the regularity theory for parabolic systems,
 and this is why we need assumptions on the initial data in appropriate Sobolev spaces.

\smallskip
Moreover, our numerical simulations confirm the asymptotic optimality of the hexagonal lattice as the number of points tends to infinity --- see Figures \ref{F1_emanuele}, \ref{F2_emanuele}, and \ref{F3_emanuele}. The colored polygons 
in Figures \ref{F1_emanuele}, \ref{F2_emanuele}, and \ref{F3_emanuele} are the hexagons. Figure \ref{F3_emanuele} suggests that the minimizers may have some small $1$-dimensional defects with respect to the hexagonal lattice. 
This may be caused by the boundary conditions used in the the numerical simulation which are not periodic, at variance with the setting used in Theorems \ref{T-LimG} and \ref{T-Gflow}. Another possible explaination is that the particle 
system may remain frozen in some local minimum state. Also, the hexagonal tessellation is not the global minimizer for a finite number $N$ of points, and this is another difference between the discrete and the continuous problems.

\bigskip

\begin{figure}
\center
\includegraphics[width=6.5cm]{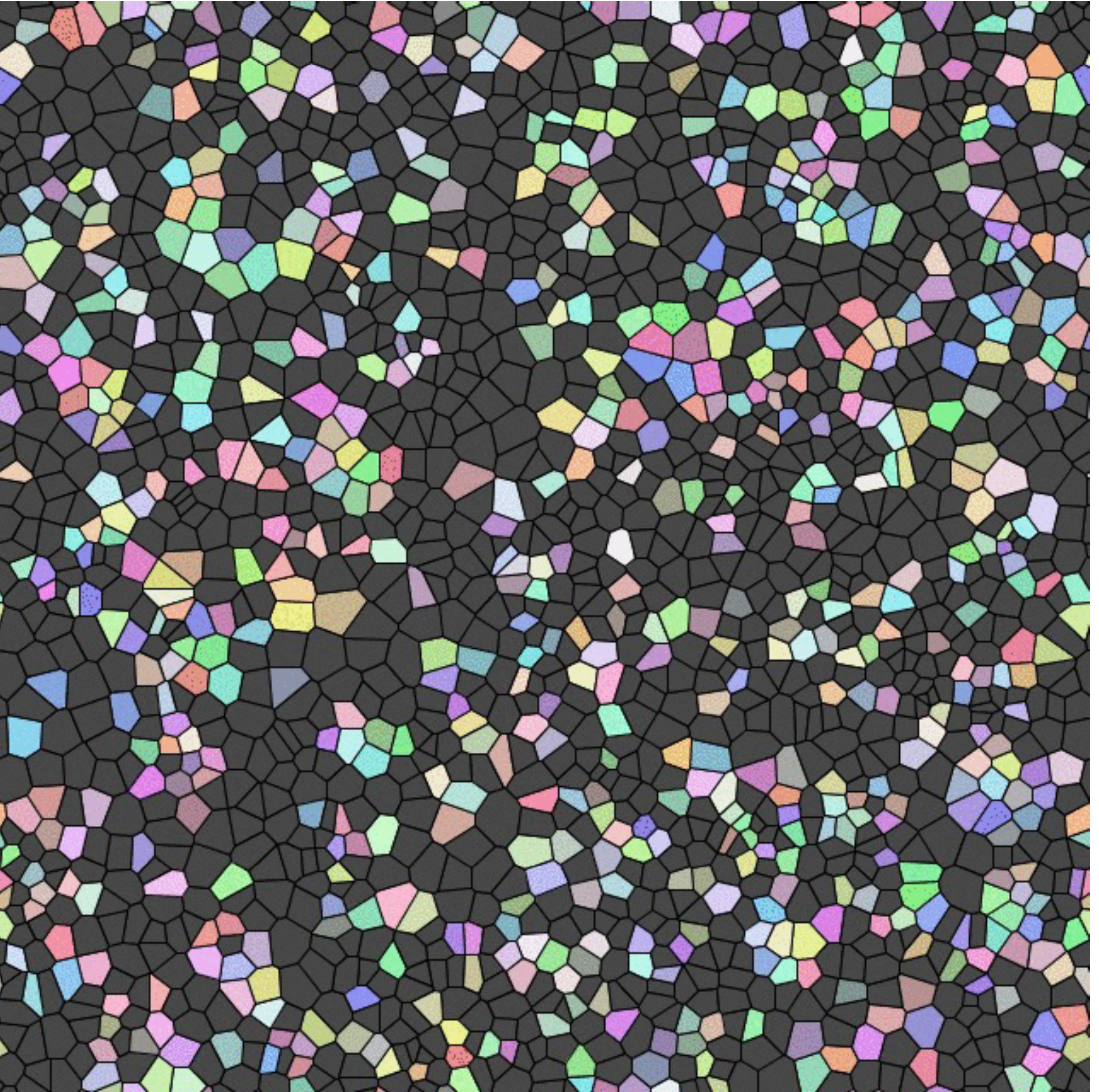}
\caption{720 points at time 0}
\label{F1_emanuele}
%
\center
\includegraphics[width=6.5cm]{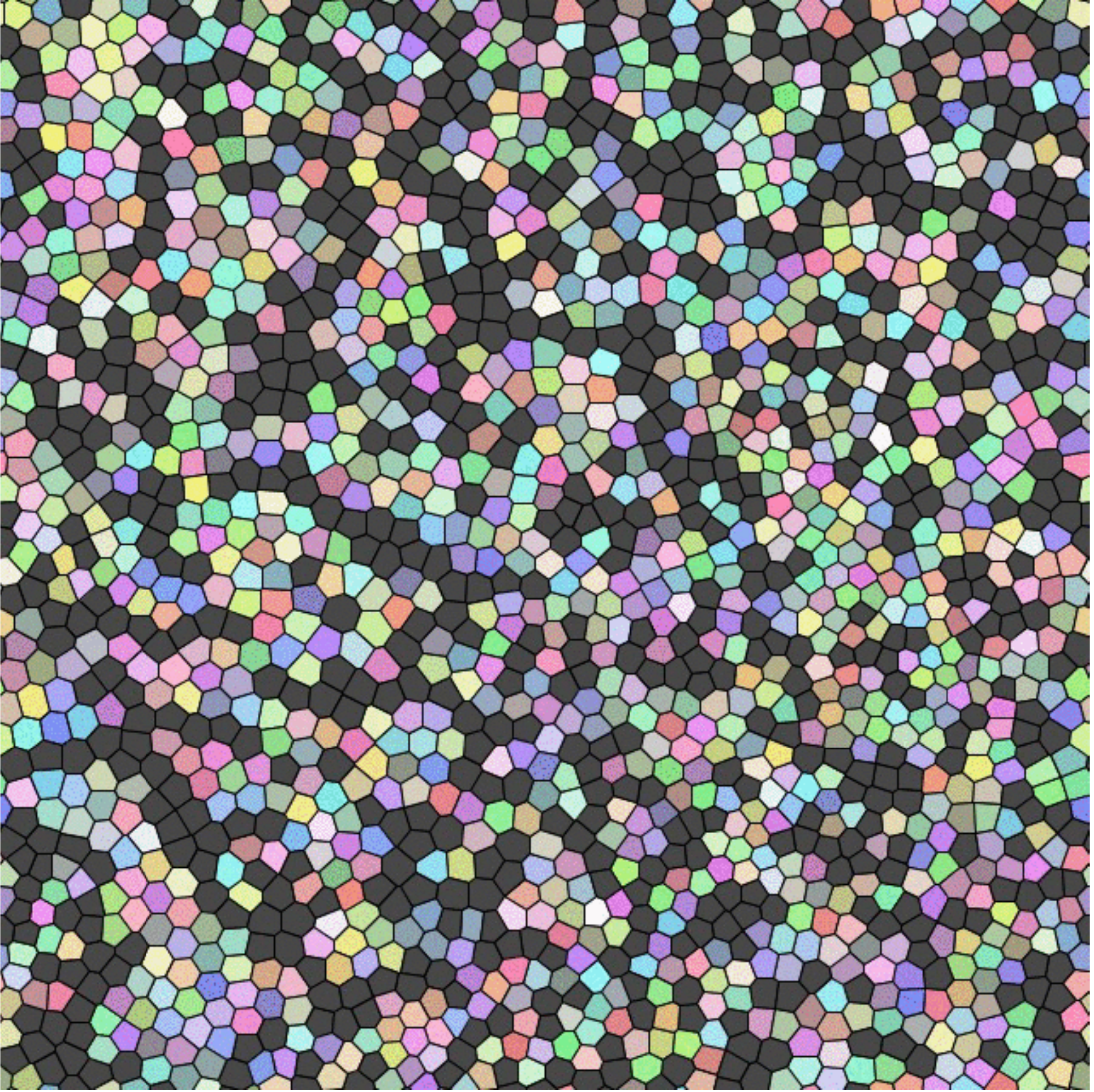}
\caption{720 points after 19 iterations}
\label{F2_emanuele}
%
\center
\includegraphics[width=6.5cm]{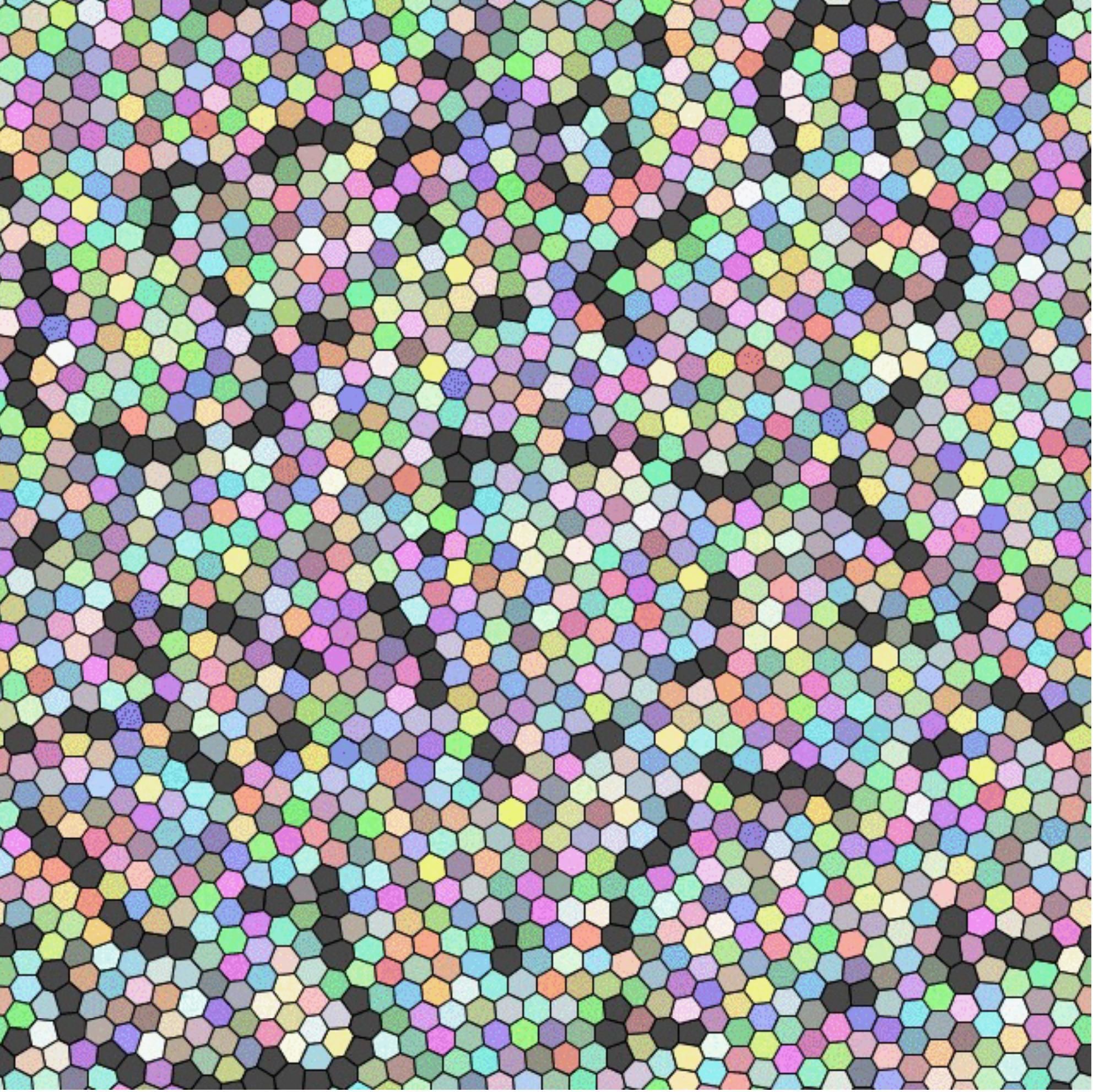}
\caption{720 points after 157 iterations}
\label{F3_emanuele}
\end{figure}
\medskip


\section{The contribution of a single Voronoi cell of the deformed lattice}\lb{S2}


From now on, we adopt the setting defined in the previous section, and we begin with some elementary geometrical observations used in computing the continuous functional $\cF$. This is the first step in the proof of Theorem \ref{T-LimG}.

Under the assumptions of Theorem \ref{T-LimG}, our goal is to compute 
$$
\int_{V(X(\eps(k_1e_1+k_2e_2))|\cX_\eps)}|y-X(\eps(k_1e_1+k_2e_2))|^2dy
$$
in terms of the displacement $X(\eps(k_1e_1+k_2e_2))$ and of the centers of the adjacent cells $V(X(\eps(k_1e_1+k_2e_2))|\cX_\eps)$, i.e. :
\medskip
$$
\ba
{}&X(\eps((k_1+1)e_1+k_2e_2))\,,\quad&&X(\eps((k_1-1)e_1+k_2e_2))\,,
\\
&X(\eps((k_1-1)e_1+(k_2+1)e_2))\,,\quad&&X(\eps((k_1+1)e_1+(k_2-1)e_2))\,,
\\
&X(\eps(k_1e_1+(k_2-1)e_2))\,,\quad&&X(\eps(k_1e_1+(k_2+1)e_2))\,,
\ea
$$
\begin{figure}
\center
\includegraphics[width=7cm]{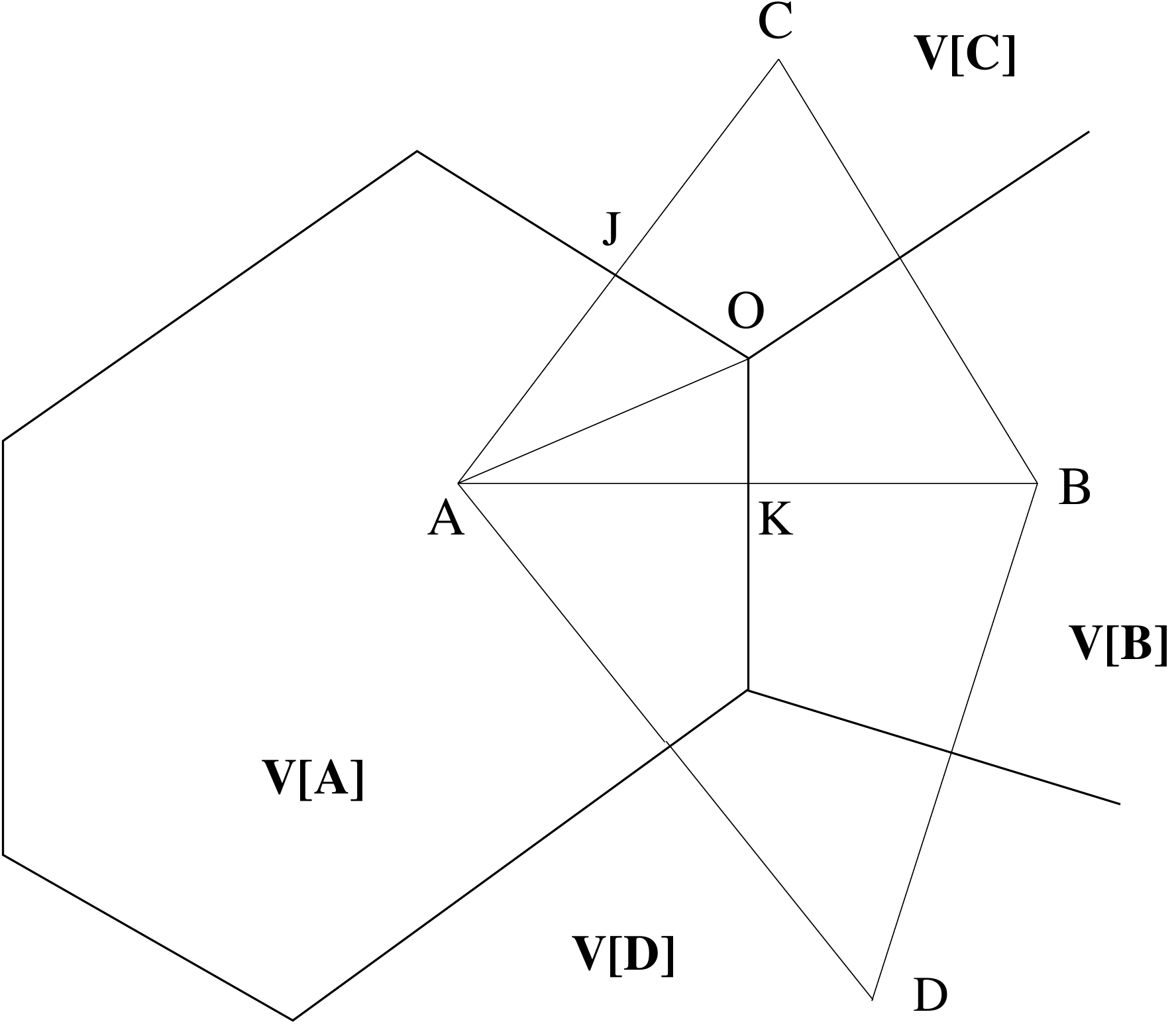}
\caption{Voronoi cell}
\label{F-1}
\end{figure}

To do that, up to sets of measure zero, we can partition each hexagon into $12$ right triangles, each of them similar either to $AOK$ or to $AOJ$ in Figure \ref{F-1}. We start integrating the function $|y-X(\eps(k_1e_1+k_2e_2))|^2$ on one of 
these right triangles. Let $T$ be a right triangle with adjacent sides to the right angle of length $h$ and $l$ (see Figure \ref{F-2}).

\begin{figure}
\center
\includegraphics[width=7cm]{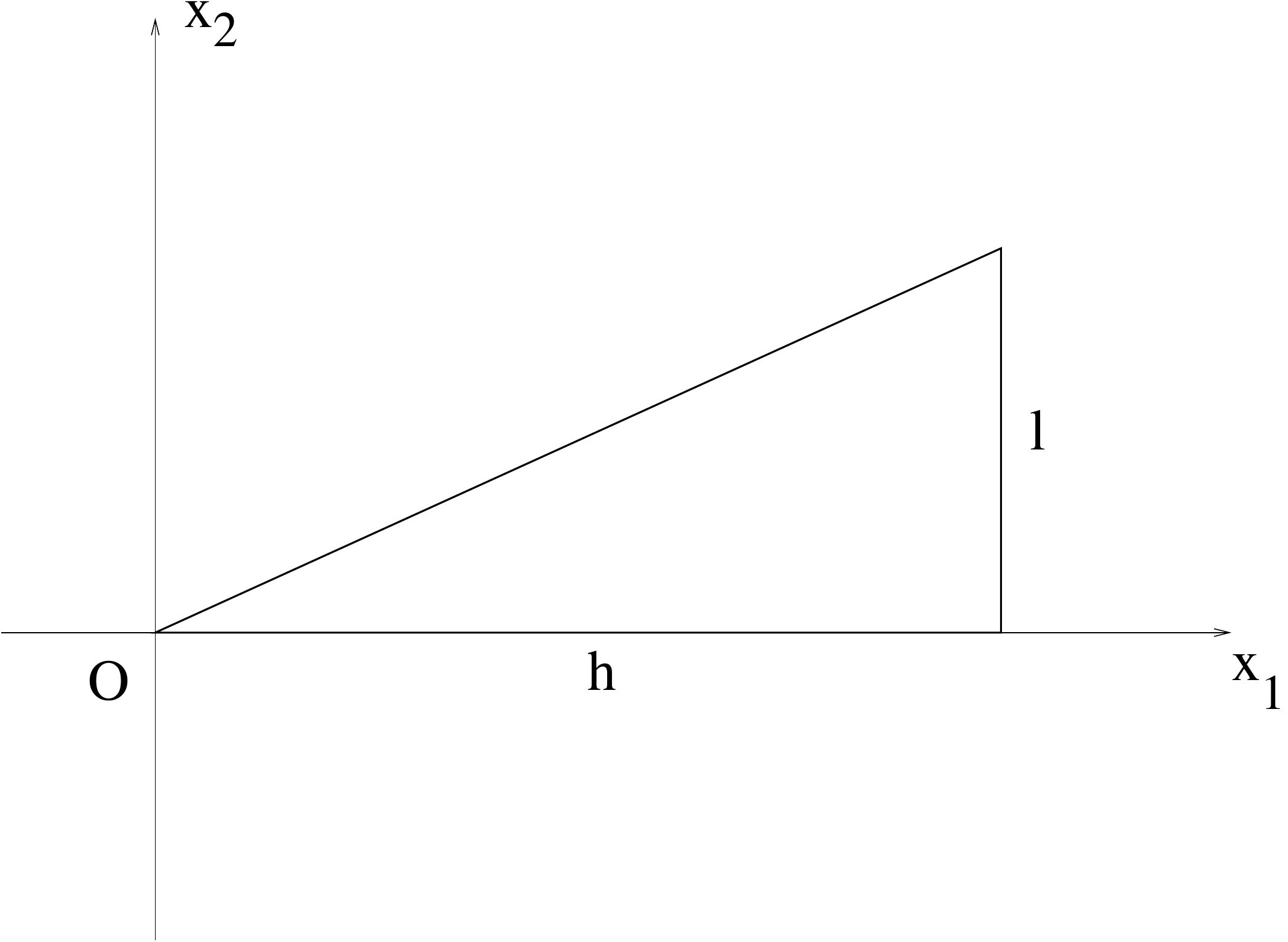}
\caption{Right triangle}
\label{F-2}
\end{figure}

With the notation of Figure \ref{F-2},
$$
\ba
\int_{T}(x_1^2+x_2^2)dx_1dx_2=\int_0^h\int_0^{lx_1/h}(x_1^2+x_2^2)dx_2dx_1=\int_0^h\left(\frac{lx_1}{h}x_1^2+\frac{l^3x_1^3}{3h^3}\right)dx_1
\\
=\int_0^hx_1^3\left(\frac{l}{h}+\frac{l^3}{3h^3}\right)dx_1=\tfrac14lh(h^2+\tfrac13l^2).
\ea
$$
Coming back to the notation of Figure \ref{F-1}, we obtain
$$
\int_{V[A]}|\vec{Ay}|^2dy=\int_{AOK}|\vec{Ay}|^2dy+\int_{AOJ}|\vec{Ay}|^2dy+\hbox{ $10$ similar terms}.
$$
Let us focus on the first term on the right hand side. We compute it in the triangle $ABC$.

\begin{figure}
\center
\includegraphics[width=7cm]{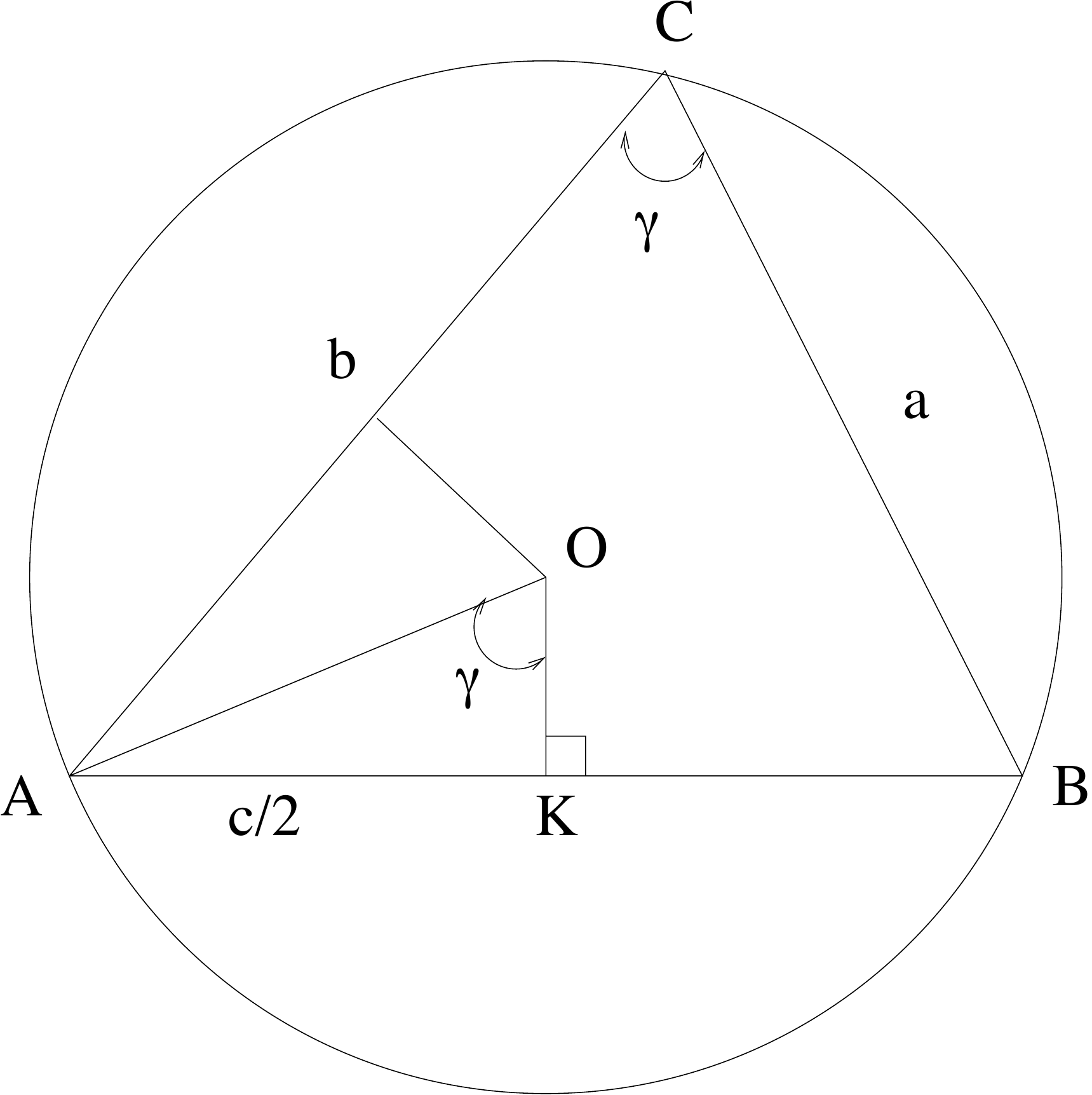}
\caption{Triangle ABC}
\label{F-3}
\end{figure}

Recalling the notation in Figure \ref{F-3}
$$
\int_{AOK}|\vec{Ay}|^2dy=\tfrac18c|OK|(\tfrac14c^2+\tfrac13|OK|^2)\,.
$$
In the triangle $ABC$, we have
$$
\widehat{AOK}=\widehat{ACB}=\g\,,
$$
and
$$
|OK|^2=|OA|^2-\tfrac14c^2=\tfrac14c^2\left(\frac1{\sin^2\g}-1\right).
$$
Moreover, denoting by $S$ the surface of the triangle $ABC$,
$$
2S=|\vec{CA}\wedge\vec{CB}|=ab\sin\g\,.
$$
Thus,
$$
|OK|^2=\tfrac14c^2\left(\frac{a^2b^2}{4S^2}-1\right),
$$
so that
$$
\ba
\int_{AOK}|\vec{Ay}|^2dy=\tfrac1{16}c^2\sqrt{\frac{a^2b^2}{4S^2}-1}\left(\tfrac14c^2+\tfrac1{12}c^2\left(\frac{a^2b^2}{4S^2}-1\right)\right)&
\\
=\tfrac1{192}c^4\sqrt{\frac{a^2b^2}{4S^2}-1}\left(\frac{a^2b^2}{4S^2}+2\right)&\,.
\ea
$$
In other words,
$$
\int_{AOK}|\vec{Ay}|^2dy=\tfrac1{192}|\vec{AB}|^4\sqrt{\frac{|\vec{CB}|^2|\vec{CA}|^2}{|\vec{CA}\wedge\vec{CB}|^2}-1}\left(\frac{|\vec{CB}|^2|\vec{CA}|^2}{|\vec{CA}\wedge\vec{CB}|^2}+2\right)\,.
$$
Exchanging $B$ and $C$, we find by symmetry that
$$
\int_{AOJ}|\vec{Ay}|^2dy=\tfrac1{192}|\vec{AC}|^4\sqrt{\frac{|\vec{BC}|^2|\vec{BA}|^2}{|\vec{BA}\wedge\vec{BC}|^2}-1}\left(\frac{|\vec{BC}|^2|\vec{BA}|^2}{|\vec{BA}\wedge\vec{BC}|^2}+2\right)\,.
$$

Let us now write the latter expression for $A=X(\eps(k_1e_1+k_2e_2))$, where the points $B$ and $C$ are the centers of the Voronoi cells adjacent to the Voronoi cell centered in $A.$ For simplicity of notation, we define
$$
X(\eps(k_1e_1+k_2e_2))=:X_{k_1,k_2}\,.
$$
Thus, the contribution
$$
\int_{V(X(\eps(k_1e_1+k_2e_2))|\cX_\eps)}|y-X(\eps(k_1e_1+k_2e_2))|^2dy
$$
of the terms related to the triangle $ABC$ with $A=X_{k_1,k_2}$, $B=X_{k_1+1,k_2}$ and $C=X_{k_1,k_2+1}$ is:
\be\lb{ContriABC}
\ba
&\int_{AOK}|\vec{Ay}|^2dy+\int_{AOJ}|\vec{Ay}|^2dy
\\
&=\tfrac1{192}|X_{k_1+1,k_2}-X_{k_1,k_2}|^4\sqrt{\frac{|X_{k_1+1,k_2}-X_{k_1,k_2+1}|^2|X_{k_1,k_2}-X_{k_1,k_2+1}|^2}{|(X_{k_1,k_2}-X_{k_1,k_2+1})\wedge(X_{k_1+1,k_2}-X_{k_1,k_2+1})|^2}-1}
\\
&\qquad\qquad\times\left(\frac{|X_{k_1+1,k_2}-X_{k_1,k_2+1}|^2|X_{k_1,k_2}-X_{k_1,k_2+1}|^2}{|(X_{k_1,k_2}-X_{k_1,k_2+1})\wedge(X_{k_1+1,k_2}-X_{k_1,k_2+1})|^2}+2\right)
\\
&+\tfrac1{192}|X_{k_1,k_2+1}-X_{k_1,k_2}|^4\sqrt{\frac{|X_{k_1,k_2+1}-X_{k_1+1,k_2}|^2|X_{k_1,k_2}-X_{k_1+1,k_2}|^2}{|(X_{k_1,k_2}-X_{k_1+1,k_2})\wedge(X_{k_1,k_2+1}-X_{k_1+1,k_2})|^2}-1}
\\
&\qquad\qquad\times\left(\frac{|X_{k_1,k_2+1}-X_{k_1+1,k_2}|^2|X_{k_1,k_2}-X_{k_1+1,k_2}|^2}{|(X_{k_1,k_2}-X_{k_1+1,k_2})\wedge(X_{k_1,k_2+1}-X_{k_1+1,k_2})|^2}+2\right).
\ea
\ee

The total contribution of the integral on the Voronoi cell
$$
\int_{V(X(\eps(k_1e_1+k_2e_2))|\cX_\eps)}|y-X(\eps(k_1e_1+k_2e_2))|^2dy
$$
is therefore the sum of $6$ terms analogous to the right hand side of (\ref{ContriABC}).


\section{The continuous functional $\mc F$}\lb{S3}


In order to derive the formula for the continuous function $\cF$, we need to replace the finite differences appearing on the right hand side of (\ref{ContriABC}) with (partial) derivatives of the deformation, i.e. of the map $X$. This is done by
using Talor's formula, and we arrive at the leading order term in the form:
$$
\ba
&\int_{AOK}|\vec{Ay}|^2dy+\int_{AOJ}|\vec{Ay}|^2dy
\\
&\sim\tfrac1{192}\eps^4|e_1\cdot\grad X(A)|^4\sqrt{\frac{|(e_1-e_2)\cdot\grad X(A)|^2|-e_2\cdot\grad X(A)|^2}{|(-e_2\cdot\grad X(A))\wedge((e_1-e_2)\cdot\grad X(A))|^2}-1}
\\
&\qquad\qquad\times\left(\frac{|(e_1-e_2)\cdot\grad X(A)|^2|-e_2\cdot\grad X(A)|^2}{|(-e_2\cdot\grad X(A))\wedge((e_1-e_2)\cdot\grad X(A))|^2}+2\right)
\\
&+\tfrac1{192}\eps^4|e_2\cdot\grad X(A)|^4\sqrt{\frac{|((e_1-e_2)\cdot\grad X(A))|^2|-e_1\cdot\grad X(A)|^2}{|(-e_1\cdot\grad X(A))\wedge((e_2-e_1)\cdot\grad X(A))|^2}-1}
\\
&\qquad\qquad\times\left(\frac{|((e_1-e_2)\cdot\grad X(A))|^2|-e_1\cdot\grad X(A)|^2}{|(-e_1\cdot\grad X(A))\wedge((e_2-e_1)\cdot\grad X(A))|^2}+2\right).
\ea
$$
It can be simplified as
$$
\ba
&\int_{AOK}|\vec{Ay}|^2dy+\int_{AOJ}|\vec{Ay}|^2dy
\\
&\sim\tfrac1{192}\eps^4|e_1\cdot\grad X(A)|^4\sqrt{\frac{|(e_1-e_2)\cdot\grad X(A)|^2|e_2\cdot\grad X(A)|^2}{|(e_1\cdot\grad X(A))\wedge(e_2\cdot\grad X(A))|^2}-1}
\\
&\qquad\qquad\times\left(\frac{|(e_1-e_2)\cdot\grad X(A)|^2|e_2\cdot\grad X(A)|^2}{|(e_1\cdot\grad X(A))\wedge(e_2\cdot\grad X(A))|^2}+2\right)
\\
&+\tfrac1{192}\eps^4|e_2\cdot\grad X(A)|^4\sqrt{\frac{|(e_1-e_2)\cdot\grad X(A)|^2|e_1\cdot\grad X(A)|^2}{|(e_2\cdot\grad X(A))\wedge(e_1\cdot\grad X(A))|^2}-1}
\\
&\qquad\qquad\times\left(\frac{|(e_1-e_2)\cdot\grad X(A)|^2|e_1\cdot\grad X(A)|^2}{|(e_2\cdot\grad X(A))\wedge(e_1\cdot\grad X(A))|^2}+2\right),
\ea
$$
as $\eps\to 0$. This can be recast as follows:
$$
\ba
&\int_{AOK}|\vec{Ay}|^2dy+\int_{AOJ}|\vec{Ay}|^2dy
\\
&\sim\tfrac1{192}\eps^4|e_1\cdot\grad X(A)|^4\sqrt{\frac{|(e_1-e_2)\cdot\grad X(A)|^2|e_2\cdot\grad X(A)|^2}{\frac{3}{4}|JX(A)|^2}-1}
\\
&\qquad\qquad\times\left(\frac{|(e_1-e_2)\cdot\grad X(A)|^2|e_2\cdot\grad X(A)|^2}{\frac{3}{4}|JX(A)|^2}+2\right)
\\
&+\tfrac1{192}\eps^4|e_2\cdot\grad X(A)|^4\sqrt{\frac{|(e_1-e_2)\cdot\grad X(A)|^2|e_1\cdot\grad X(A)|^2}{\frac{3}{4}|JX(A)|^2}-1}
\\
&\qquad\qquad\times\left(\frac{|(e_1-e_2)\cdot\grad X(A)|^2|e_1\cdot\grad X(A)|^2}{\frac{3}{4}|JX(A)|^2}+2\right)
\ea
$$
as $\eps\to 0$, with the notation $JX:=\Det(\grad X)$.

The total contribution of the Voronoi cell centered in $A$ is the sum of the latter term, plus $5$ analogous contributions obtained by transforming the $3$ unit vectors $e_{12}:=e_1-e_2$, $e_1$, $e_2$ in their images under the action 
of the cyclic group generated by the rotation of $\tfrac{\pi}3.$ 

Since each term is invariant by the symmetry centered in $A,$ we find that:
$$
\ba
&\int_{V(X(\eps(k_1e_1+k_2e_2))|\cX_\eps)}|y-X(\eps(k_1e_1+k_2e_2))|^2dy
\\
&\sim\frac{\eps^4}{48}|e_1\cdot\grad X|^4\Phi(e_1,\grad X)(3+\Phi(e_1,\grad X)^2)(\eps(k_1e_1+k_2e_2))
\\
&+\frac{\eps^4}{48}|e_2\cdot\grad X|^4\Phi(e_2,\grad X)(3+\Phi(e_1,\grad X)^2)(\eps(k_1e_1+k_2e_2))
\\
&+\frac{\eps^4}{48}|e_{12}\cdot\grad X|^4\Phi(e_{12},\grad X)(3+\Phi(e_1,\grad X)^2)(\eps(k_1e_1+k_2e_2))
\ea
$$
as $\eps\to 0$, where
\be\lb{FlaPhi}
\Phi(e,M):=\sqrt{\frac{|MRe|^2|MR^Te|^2}{\frac{3}{4}|\Det(M)|^2}-1}\,,
\ee
and where $R$ is the rotation of an angle $\tfrac{\pi}3$. 

Then the computation above can be summarized as follows:
$$
\int_{V(X(\eps(k_1e_1+k_2e_2))|\cX_\eps)}|y-X(\eps(k_1e_1+k_2e_2))|^2dy\sim\eps^4F(\grad X(\eps(k_1e_1+k_2e_2)))
$$
as $\eps\to 0$, where
\be\lb{FlaF}
F(M):=\tfrac1{48}\sum_{\om\in\{e_1,e_2,e_{12}\}}|M\om|^4\Phi(\om,M)(3+\Phi(\om,M)^2)\,.
\ee

At this point, we recall that $X$ satisfies property (a). Therefore for each $\eps=1/n$ with $n$ a positive integer, the function
$$
y\mapsto\Dist(y,\cX_\eps)\hbox{ is }\scrL\hbox{-periodic.}
$$
Indeed
$$
\Dist(y,\cX_\eps)=y-X(\eps k)\Rightarrow\Dist(y,\cX_\eps)=y+l-X(\eps(k+nl)\ge\Dist(y+l,\cX_\eps)
$$
for each $y\in\bR^2$ and each $l\in\scrL$. Repeating the same argument with $y+l$ and $-l$ instead of $y$ and $l$ shows that
$$
\Dist(y,\cX_\eps)\ge\Dist(y+l,\cX_\eps)\ge\Dist(y,\cX_\eps)
$$
for each $y\in\bR^2$ and $l\in\scrL$, which is precisely the $\scrL-$periodicity condition.

On the other hand, property (a) implies that $X(\Pi)$ is a fundamental domain for $\bR^2/\scrL$. Indeed
$$
X(\Pi+l)=X(\Pi)+l\,,\qquad l\in\scrL
$$
so that
$$
\bigcup_{l\in\scrL}X(\Pi)+l=X\left(\bigcup_{l\in\scrL}\Pi+l\right)=X(\bR^2)=\bR^2\,.
$$
On the other hand
$$
(X(\Pi)+l)\cap X(\Pi)=X(\Pi+l)\cap X(\Pi)=X((\Pi+l)\cap\Pi)\subset X(\d\Pi)
$$
is a set of measure $0$.

Finally
\be\lb{VolXPi}
|X(\Pi)|=1\,.
\ee
By property (a), the deformation map
$$
Y:=X-\id:\,x\mapsto X(x)-x=:Y(x)
$$
is $\scrL$-periodic. We recall the following classical observation.

\begin{lem}\lb{L-DetJac}
Let $Y\in C^1(\bR^2;\bR^2)$ be $\scrL$-periodic. Then
$$
\int_\Pi\Det(\grad Y)(x)dx_1\wedge dx_2=0\,.
$$
\end{lem}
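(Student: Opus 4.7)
The plan is to exhibit $\Det(\grad Y)$ as an exact 2-form on the torus $\bR^2/\scrL$ (equivalently, as a divergence) and then invoke Stokes' theorem on $\Pi$, using the $\scrL$-periodicity of $Y$ to cancel the resulting boundary integrals.

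\textbf{Step 1 (algebraic identity).} Assuming temporarily that $Y\in C^2$, write
$$
\Det(\grad Y)=\d_1Y_1\d_2Y_2-\d_2Y_1\d_1Y_2=\d_1(Y_1\d_2Y_2)-\d_2(Y_1\d_1Y_2),
$$
where the cross terms $Y_1\d_1\d_2Y_2$ and $Y_1\d_2\d_1Y_2$ cancel by Schwarz's theorem. Equivalently, $dY_1\wedge dY_2=d(Y_1\,dY_2)$, so the 2-form $\Det(\grad Y)\,dx_1\wedge dx_2$ is exact.

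\textbf{Step 2 (Stokes and periodicity).} The parallelogram $\Pi$ has two pairs of opposite edges, each pair related by translation by $e_1$ or by $e_2$. Applying the divergence theorem,
$$
\int_\Pi\Det(\grad Y)\,dx_1\wedge dx_2=\int_{\d\Pi}\bigl(Y_1\d_2Y_2,-Y_1\d_1Y_2\bigr)\cdot\nu\,ds.
$$
On paired edges the outward normals $\nu$ are opposite, while $Y_1$, $\d_1Y_2$, and $\d_2Y_2$ take identical values at corresponding points by $\scrL$-periodicity; the two contributions therefore cancel and the boundary integral vanishes.

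\textbf{Step 3 (removing the $C^2$ hypothesis).} If $Y$ is merely $C^1$, regularize by convolution with a standard mollifier: $Y^\de:=Y*\rho_\de$ is $C^\infty$ and $\scrL$-periodic, so Steps 1--2 yield $\int_\Pi\Det(\grad Y^\de)\,dx=0$. Since $\grad Y^\de\to\grad Y$ uniformly on $\Pi$ as $\de\to 0^+$, one has $\Det(\grad Y^\de)\to\Det(\grad Y)$ uniformly on $\Pi$, and passing to the limit yields the desired identity.

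The only mild obstacle is the $C^1$ regularity; the algebraic identity of Step 1 requires the interchange of mixed partials, handled by mollification in Step 3. Everything else is the standard observation that a closed form on the torus integrates to zero over a fundamental domain.
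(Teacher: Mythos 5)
Your proof is correct and follows essentially the same route as the paper: the identity $\Det(\grad Y)\,dx_1\wedge dx_2=d(Y_1\,dY_2)$, Stokes on $\Pi$ with cancellation of paired boundary edges by $\scrL$-periodicity, and mollification to pass from $C^2$ to $C^1$. Your Step~2 merely spells out in more detail the cancellation that the paper states in one line.
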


\begin{proof}
If $Y$ is of class $C^2$, one has
$$
\Det(\grad Y)dx_1\wedge dx_2=d(Y_1dY_2)
$$
where $Y_1(x)$ et $Y_2(x)$ are the components of the vector $Y(x)$. Then
$$
\int_\Pi\Det(\grad Y)(x)dx_1\wedge dx_2=\int_\Pi d(Y_1dY_2)=\int_{\d\Pi}Y_1dY_2=0
$$
because $Y$ is $\scrL$-periodic. 

If $Y$ is only of class $C^1$, let $\chi_\a$ be a regularizing sequence. Then, for each $\a>0$, the map $\chi_\a\star Y=:Y_\a\in C^2(\bR^2;\bR^2)$ is still $\scrL$-periodic and $\grad Y_\a\to\grad Y$ uniformly on $\Pi$ as $\a\to 0$. Therefore
$$
\int_\Pi\Det(\grad Y)(x)dx_1\wedge dx_2=\lim_{\a\to 0}\int_\Pi\Det(\grad Y_\a)(x)dx_1\wedge dx_2=0\,.
$$
\end{proof}

\smallskip
Thus
$$
|X(\Pi)|=\int_{X(\Pi)}dy=\int_{\Pi}|\Det(\grad X(x))|dx=\int_{\Pi}\Det(\grad X(x))dx
$$
since $X$ is $C^1$-close to the identity (property (b)), and
$$
\Det(\grad X)=\Det(I+\grad Y)=1+\Div Y+\Det(\grad Y)\,.
$$
Therefore
$$
|X(\Pi)|=\int_\Pi(1+\Div Y+\Det(\grad Y))(x)dx_1\wedge dx_2=1+\int_{\d\Pi}Y\cdot nds=0
$$
by $\scrL$-periodicity of $Y$. This proves (\ref{VolXPi}).

Set
$$
B_R[\scrL]:=\{l\in\scrL\hbox{ s.t. }X(\Pi)+l\subset B(0,R)\}\,,
$$
and $\de:=\Diam(X(\Pi)$. Then
$$
B(0,R-\de)\subset\bigcup_{l\in B_R[\scrL]}(X(\Pi)+l\subset B(0,R))\subset B(0,R)
$$
and since the sets $X(\Pi)+l$ are pairwise disjoint (up to sets of measure $0$) as $l\in\scrL$, we conclude from (\ref{VolXPi}) that
$$
\pi(R-\de)^2=|B(0,R-\de)|\le\#B_R[\scrL]\le|B(0,R)|=\pi R^2
$$
so that
$$
\#B_R[\scrL]\sim\pi R^2\quad\hbox{ as }R\to\infty\,.
$$
Thus
$$
\ba
\#B_R[\scrL]\int_{X(\Pi)}\Dist(y,\cX_\eps)^2dy&\le\int_{B(0,R)}\Dist(y,\cX_\eps)^2dy
\\
&\le\#B_{R+\de}[\scrL]\int_{X(\Pi)}\Dist(y,\cX_\eps)^2dy\,,
\ea
$$
and hence
$$
\frac1{\pi R^2}\int_{B(0,R)}\Dist(y,\cX_\eps)^2dy\to\int_{X(\Pi)}\Dist(y,\cX_\eps)^2dy\hbox{ as }R\to\infty\,.
$$

Thus
$$
\ba
\lim_{R\to\infty}\frac1{R^2}\int_{B(0,R)}\Dist(y,\cX_\eps)^2dy=\int_{X(\Pi)}\Dist(y,\cX_\eps)^2dy
\\
=
\sum_{\eps\max(|k_1|,|k_2|)<1/2}\int_{V(X(\eps(k_1e_1+k_2e_2))|\cX_\eps)}|y-X(\eps(k_1e_1+k_2e_2))|^2dy
\\
\sim\eps^2\int_\Pi F(\grad X(x))dx
\ea
$$
as $\eps\to 0^+$.


\section{Gradient of the functional $\mc F$}


Henceforth we denote
$$
\fH_0:=\left\{Z\in L^2(\bR^2/\scrL;\bR^2)\hbox{ s.t. }Z\hbox{  is }\scrL\hbox{-periodic and}\int_\Pi Z(x)dx=0\right\}
$$
and
$$
\id+\fH_0:=\{\id+Z\hbox{ s.t. }Z\in\fH_0\}\,.
$$
The function $\cF$ is obviously defined on $(\id+\fH_0)\cap\Diff^1(\bR^2)$, and we seek to compute its $L^2$-gradient at $X\in\Diff^2(\bR^2)$.

Let $X\in(\id+\fH_0)\cap\Diff^2(\bR^2)$ and $Y\in\fH_0\cap C^1(\bR^2/\scrL;\bR^2)$; by the implicit function theorem $X+\tau Y\in(\id+\fH_0)\cap\Diff^2(\bR^2)$ for all $\tau$ sufficiently small. With the expression for $\cF$ obtained in 
Theorem \ref{T-LimG}, one anticipates that
\be\lb{GalGradF}
\ba
\frac{d}{d\tau}\cF(X+\tau Y)\rstr_{t=0}=&\int_\Pi\grad F(\grad_xX(x))\cdot\grad_xY(x)dx
\\
=&-\int_\Pi\Div_x(\grad F(\grad_xX(x)))\cdot Y(x)dx\,.
\ea
\ee
In order to verify the second equality above, one only needs to check that $F$ is of class $C^2$ on the set of invertible matrices. Notice indeed that the boundary term coming from Green's formula satisfies
$$
\int_\Pi\grad F(\grad_xX(x)))\cdot Y(x)\otimes n_xds(x)=0
$$
because $\grad_xX(x)=I+\grad_xZ$ and $Y$ are both $\scrL$-periodic. That $F$ is of class $C^2$ on a neighborhood of $I$ in $GL_2(\bR)$ follows from (\ref{FlaF}). Indeed, $\Det(M)\not=0$ for all $M\in GL_2(\bR)$, and one has
$$
\Phi(e,M)^2\to\Phi(e,I)=\tfrac13\quad\hbox{ for all unit vector }e\hbox{ as }M\to I\,.
$$
By continuity, there exists an open neighborhood $\Om$ of $I$ in $GL_2(\bR)$ such that 
$$
\Phi(e,M)^2>\tfrac14\quad\hbox{ for all }(e,M)\in\bS^2\times\Om\,.
$$
Therefore $\Phi(e,\cdot)$ is of class $C^2$ on $\Om$ for each unit vector $e$, and therefore $F$ is of class $C^2$ on $\Om$.

\smallskip
Next we compute $\grad F$. The first step is to compute the directional derivative of $M\mapsto\Phi(e,M)$ at the point $M\in GL_2(\bR)$ along the direction $N \in M_2(\bR)$. We find that
\begin{multline*}
\frac{d}{d\tau}\Phi(e, M+\tau N)|_{\tau=0}
\\
=\frac{1}{2 \Phi(e, M)}\left[\frac{2(NRe|MRe)|MR^Te|^2}{\frac{3}{4}\Det(M)^2}+\frac{2(NR^Te|MR^Te)|MRe|^2}{\frac{3}{4}\Det(M)^2}\right.
\\
\left.-\frac{2|MRe|^2|MR^Te|^2 \Det(M)\Tr(M^{-1}N)}{\frac{3}{4}\Det(M)^3} \right].
\end{multline*}
Recalling that
$$
(Nu|v)=\Tr\big(N(u\otimes v)\big), \qquad u,v \in \bR^2,\quad N\in M_2(\bR),
$$
and 
$$
\frac{d}{d\tau}\Det(M+\tau N)|_{\tau=0}=\Det(M)\Tr(M^{-1}N)\,,\quad M\in GL_2(\bR),\,\,N\in M_2(\bR),
$$
we obtain
\begin{multline*}
\frac{d}{d\tau}\Phi(e, M+\tau N)|_{\tau=0}=\frac{4}{3\Phi(e, M)\Det(M)^2}\left[\Tr[N(Re \otimes MRe)]|MR^Te|^2\right.
\\
\left.+\Tr[N(R^Te\otimes MR^Te)]|MRe|^2-|MRe|^2|MR^Te|^2\Tr(M^{-1}N)\right].
\end{multline*}
Defining
\be\lb{FlaA}
A(e,M):=\frac{(Re\otimes MRe)}{|MRe|^2}+\frac{(R^Te\otimes MR^Te)}{|MR^Te|^2}-M^{-1}\,,
\ee
we see that the map $M\mapsto A(e,M)$ is a tensor-field on $GL_2(\bR)$, homogeneous of degree $-1$ with respect to $M$. The differential of $M\mapsto\Phi(e,M)$ can be easily expressed in terms of $A:$ multiplying and dividing 
each term of the latter equality by $\Phi(e,M)^2+1$, we get
$$
d_M\Phi(e, M)[N]=\frac{\Phi(e,M)^2+1}{\Phi(e, M)}\Tr(A(e,M)N).
$$
In other words, considering the Frobenius inner product defined on $M_2(\bR)$ by 
$$
(M_1|M_2)=\Tr(M_1^TM_2)\,,
$$
the gradient of the map $M\mapsto\Phi(e,M)$ at the point $M\in GL_2(\bR)$ is
$$
\grad_M\Phi(e,M)=\frac{\Phi(e,M)^2+1}{\Phi(e, M)}A(e,M)^T.
$$

Therefore
$$
\ba
\grad F(M)=4\sum_{\om\in\{e_1,e_2,e_{12}\}}|M\om|^2\Phi(\om,M)(3+\Phi(\om,M)^2)\om\otimes(M\om)&
\\
+3\sum_{\om\in\{e_1,e_2,e_{12}\}}|M\om|^4\frac{(1+\Phi(\om,M)^2)^2}{\Phi(\om,M)}A(\om,M)^T&\,.
\ea
$$
Inserting this expression in (\ref{GalGradF}), we find that
$$
\ba
\frac{d}{d\tau}\cF(X+\tau Y)|_{\tau=0}&
\\
=-\tfrac{1}{12}\sum_{\om\in\{e_1,e_2,e_{12}\}}\int_{\Pi}\Div_x(|\om\cdot\grad X|^2(3+\Phi^2)\Phi(\om,\grad X)(\om\cdot\grad X)\otimes\om)\cdot Ydx&
\\
-\tfrac{1}{16}\sum_{\om\in\{e_1,e_2,e_{12}\}}\int_{\Pi}\Div_x\left(|\om\cdot\grad X|^4\frac{(1+\Phi(\om,\grad X)^2)^2}{\Phi(\om,\grad X)}A(\om,\grad X)\right)\cdot Ydx&\,.
\ea
$$

In other words, the $L^2$-gradient of $\cF$ is given by the formula
\be\lb{FlaGradF}
\ba
\frac{\de\cF(X)}{\de X(x)}=&-\Div_x(\grad F(\grad_xX(x))
\\
=&-\tfrac{1}{12}\sum_{\om\in\{e_1,e_2,e_{12}\}}\Div_x(|\om\cdot\grad X|^2(3+\Phi^2)\Phi(\om,\grad X)\om\otimes(\om\cdot\grad X))
\\
&-\tfrac{1}{16}\sum_{\om\in\{e_1,e_2,e_{12}\}}\Div_x\left(|\om\cdot\grad X|^4\frac{(1+\Phi(\om,\grad X)^2)^2}{\Phi(\om,\grad X)}A(\om,\grad X)^T\right)\,.
\ea
\ee
Its $i$-th coordinate is given by the expression 
\be\lb{FlaGradFi}
\ba
\frac{\de\cF(X)}{\de X_i(x)}=&-\tfrac{1}{12}\sum_{\om\in\{e_1,e_2,e_{12}\}}\d_j(|\om\cdot\grad X|^2(3+\Phi^2)\Phi(\om,\grad X)\om_j\om_k\d_kX_i))
\\
&-\tfrac{1}{16}\sum_{\om\in\{e_1,e_2,e_{12}\}}\d_j\left(|\om\cdot\grad X|^4\frac{(1+\Phi(\om,\grad X)^2)^2}{\Phi(\om,\grad X)}A_{ji}(\om,\grad X)\right)\,,
\ea
\ee
with the usual convention of summation on repeated indices.


\section{The asymptotic energy functional \\ for a slightly deformed hexagonal lattice}\lb{S5}


In this section we study the functional $\cF(X)$ near $X=\id$. More precisely, we seek a rational expression of $F(M)$ (with $F$ defined by (\ref{FlaF}) for $M\in GL_2(\bR)$ near $I$. In view of Theorem \ref{T-LimG}, this gives a simplified
expression of $\cF(X)$ near $X=\id$.

Let
$$
M_1=\left(
\begin{matrix}
 \a & \b \\
 \g & \de \\
\end{matrix}
\right)
$$
be a near-identity matrix. Straightforward computations show that
\be\lb{FlaPhi1}
\Phi(e_1,M_1)=\tfrac1{2 \sqrt{3}}\sqrt{\frac{\left(\a^2-3 \b^2+\g^2-3 \de^2\right)^2}{(\b \g-\a \de)^2}}=\frac{-\a^2+3 \b^2-\g^2+3 \de^2}{2 \sqrt{3}(\a\de-\b\g)}\,.
\ee
Indeed, since $|M_1-I|\ll 1$, one has $\a^2-3 \b^2+\g^2-3 \de^2<0$.

Next, we observe that
$$
\ba
\Phi(Re,M)=&\sqrt{\frac{|MR^2e|^2|Me|^2}{\frac{3}{4}|\Det(M)|^2}-1}
\\
=&\sqrt{\frac{|R^TMR^2e|^2|R^TMRR^Te|^2}{\frac{3}{4}|\Det(R^TMR)|^2}-1}=\Phi(e,R^TMR)
\ea
$$
for each $M\in GL_2(\bR)$ and each unit vector $e$. Indeed, the second equality above follows from the obvious identity $|R\xi|=|R^T\xi|=|\xi|$ for all $\xi\in\bR^2$ since $R$ is a rotation. Therefore
\be\lb{FlaPhiRot}
\ba
{}&\Phi(e_2,M_1)\,=\Phi(e_1,M_2)&&\quad\hbox{ with }M_2\,=R^TM_1R
\\
&\Phi(e_{12},M_1)\!=\Phi(e_1,M_{12})&&\quad\hbox{ with }M_{12}\!=RM_1R^T
\ea
\ee
since $e_2=Re_1$ and $e_{12}=R^Te_1$.

Elementary computations show that
$$
M_2=\tfrac14\left(\begin{matrix}
\a+\sqrt{3}\g+\sqrt{3}\b+3\de &-\sqrt{3}\a-3\g+\b+\sqrt{3}\de
\\	\\
-\sqrt{3}\a+\g-3\b+\sqrt{3}\de &3\a-\sqrt{3}\g-\sqrt{3}\b+\de
\end{matrix}\right)
$$
and
$$
M_{12}=\tfrac14\left(\begin{matrix}
\a-\sqrt{3}\g-\sqrt{3}\b+3\de &\sqrt{3}\a-3\g+\b-\sqrt{3}\de
\\	\\
\sqrt{3}\a+\g-3\b-\sqrt{3}\de &3\a+\sqrt{3}\g+\sqrt{3}\b+\de
\end{matrix}\right)
$$
Notice that 
$$
|M_1-I|\ll 1\Rightarrow |M_2-I|=|R^T(M_1-I)R|\ll 1\hbox{ and }|M_{12}-I|=|R(M_1-I)R^T|\ll 1.
$$
Hence we can use formula (\ref{FlaPhi1}) to compute $\Phi(e_2,M_1)$ and $\Phi(e_{12},M_1)$ with the help of  and (\ref{FlaPhiRot}). We find that
\be\lb{FlaPhi2}
\Phi(M_1,e_2)=\frac{\sqrt{3} \a^2-3 \a \b+\sqrt{3} \g^2-3 \g \de}{3 ( \a \de - \b \g )}\,,
\ee
and 
\be\lb{FlaPhi12}
\Phi(M_1,e_{12})=\frac{{\sqrt{3}}{\a^2}+3\a \b+{\sqrt{3}}{\g^2}+3\g \de}{3(\a \de - \b \g)}\,.
\ee

Finally, we insert the expressions found in (\ref{FlaPhi1}), (\ref{FlaPhi2}) and (\ref{FlaPhi12}) in formula (\ref{FlaF}), and find that
\be
F(M_1)=\tfrac1{96\sqrt{3}}\frac{P(\a,\b,\g,\de)}{\a \de - \b \g}
\ee
where
\be
\ba
P(\a,\b,\g,\de)=&-\a^6+6 \a^4 \b^2-9 \a^2 \b^4-3 \a^4 \g^2
\\
&+18 \a^2 \b^2 \g^2+9 \b^4 \g^2-3 \a^2 \g^4+12 \b^2 \g^4-\g^6-12 \a^3 \b \g \de
\\
&-36 \a \b^3 \g \de-12 \a \b \g^3 \de+12 \a^4 \de^2+18\a^2 \g^2 \de^2
\\
&+6 \g^4 \de^2-36 \a \b \g \de^3+9 \a^2 \de^4-9 \g^2 \de^4\,.
\ea
\ee
We shall simplify this expression, and more precisely give an intrinsic formula for the polynomial $P$. Set
$$
S=\left(
\begin{matrix}
 1 & 0 \\
 0 & -1 \\
\end{matrix}
\right)\,.
$$
Elementary (although tedious) computations show that
$$
P(\a,\b,\g,\de)=\tfrac12(Q_+(\a,\b,\g,\de)+Q_-(\a,\b,\g,\de))
$$
with
$$
\ba
Q_+(\a,\b,\g,\de)&:=(\a^2+\b^2+\g^2+\de^2)\left(24(\a\de-\b\g)^2-(\a^2+\b^2+\g^2+\de^2)^2\right)\,,
\\
Q_-(\a,\b,\g,\de)&:=(\a^2-\b^2+\g^2-\de^2)\left(12(\a\b+\g\de)^2-(\a^2-\b^2+\g^2-\de^2)^2\right)\,.
\ea
$$
One has
$$
\ba
(\a^2+\b^2+\g^2+\de^2)&=\Tr(M_1^TM_1)\,,
\\
(\a^2-\b^2+\g^2-\de^2)&=\Tr(M_1^TM_1S)\,,
\ea
$$
while
$$
\ba
(\a\b+\g\de)^2=&(\a^2+\g^2)(\b^2+\de^2)-(\a\de-\b\g)^2
\\
=&\Tr(M_1^TM_1\tfrac{I+S}2)\Tr(M_1^TM_1\tfrac{I-S}2)-\Det(M_1)^2
\\
=&\tfrac14\left(\Tr(M_1^TM_1)^2-\Tr(M_1^TM_1S)^2\right)-\Det(M_1)^2\,.
\ea
$$
Therefore
$$
\ba
P(\a,\b,\g,\de)=&6\Det(M_1)^2\Tr(M_1^TM_1(2S-I))
\\
&+\tfrac32\Tr(M_1^TM_1)^2\Tr(M_1^TM_1S)
\\
&-\tfrac12\Tr(M_1^TM_1)^3-2\Tr(M_1^TM_1S)^3\,.
\ea
$$
Hence
$$
\ba
F(M_1)=&\tfrac1{16\sqrt{3}}\Det(M_1)\Tr(M_1^TM_1(2S-I))
\\
&+\tfrac1{64\sqrt{3}}\frac{\Tr(M_1^TM_1)^2\Tr(M_1^TM_1S)}{\Det(M_1)}
\\
&-\tfrac1{192\sqrt{3}}\frac{\Tr(M_1^TM_1)^3}{\Det(M_1)}-\tfrac1{48\sqrt{3}}\frac{\Tr(M_1^TM_1S)^3}{\Det(M_1)}\,.
\ea
$$

We finally compute the Taylor expansion of $48F$ at order $3$ near the identity matrix. Setting 
$$
N=\left(
\begin{matrix}
 a & b \\
 c & d
\end{matrix}
\right)
$$
we find that
\be\lb{eq:Taylor}
\ba
48F(I+\eps N)=\tfrac{10}{\sqrt{3}}+\tfrac{20}{\sqrt{3}}\eps(a+d)
\\
+\tfrac1{\sqrt{3}}\eps^2(13 a^2+3 b^2-14 b c+3 c^2+34 ad+13 d^2)
\\
+\tfrac1{\sqrt{3}}\eps^3(a^3+9ab^2-2 abc+9 ac^2+23 a^2 d-3 b^2 d-26 bcd-3 c^2 d+11 ad^2+5 d^3)
\\
+O(\eps^4)
\ea
\ee
It is interesting to notice that the Taylor expansion of $F$ around the identity matrix is invariant under the substitutions $a\leftrightarrow d$ and $b \leftrightarrow c$ only up to second order. More precisely
\be\lb{eq:TaylorIntr}
\ba
48F(I+\eps N)=\tfrac{10}{\sqrt{3}}+\tfrac{20}{\sqrt{3}}\eps\Tr(N)
\\
+\tfrac1{\sqrt{3}}\eps^2\left(14\Det(N)+10\Tr(N)^2+3\Tr(N^TN)\right)&+O(\eps^3)\,.
\ea
\ee


\section{Stability and asymptotic convergence for small perturbations}\lb{S6}


In this section we use a perturbative approach to study stability properties of the energy functional $\cF(X)$ around the identity. 

Following formula (\ref{GalGradF}), we consider the PDE defining the gradient flow of $\cF$ in the form
\be\lb{CauchyPbGFlow}
\d_tX(t,x)=\Div_x(\grad F(\grad_xX(t,x))\,,\qquad X(0,x)=X^{in}(x)\,.
\ee
We assume that $X^{in}$ satisfies properties (a-c), and we seek a (weak) solution of the Cauchy problem (\ref{CauchyPbGFlow}) such that $X(t,\cdot)\in\Diff^1(\bR^2)$ satisfies (properties (a-c) for all $t\ge 0$. In particular, property (c) 
is preserved by the evolution of (\ref{CauchyPbGFlow}) since the system of PDEs governing $X$ is in divergence form.

Therefore, we henceforth seek $X$ of the form
$$
X(t,x)=x+\eps Y(t,x)
$$
with $0<\eps\ll 1$, and property (a) implies that $Y(t,\cdot)$ is a $\scrL$-periodic map from $\bR^2$ to itself.

\smallskip
\noindent
{\it Step 1: Convexification of the problem.}

Define the function $F_0$ on $GL_2(\bR)$ as follows:
\be\lb{DefF0}
F_0(M):=F(M)-\tfrac{5}{12\sqrt{3}}\Tr(M-I)-\tfrac{7}{24\sqrt{3}}\Det(M-I)\,.
\ee
Thus
$$
F(I+\eps\grad_xY)=F_0(I+\eps\grad_xY)+\tfrac{5}{12\sqrt{3}}\eps\Div_x(Y)+\tfrac{7}{24\sqrt{3}}\eps^2\Det(\grad_xY)\,.
$$
Therefore
$$
\cF(\id+\eps Y)=\int_\Pi F(I+\eps\grad_xY(x))dx=\int_\Pi F_0(I+\eps\grad_xY(x))dx
$$
since
$$
\int_\Pi\Div_x(Y)(x)dx=0\quad\hbox{ and }\int_\Pi\Det(\grad_xY(x))dx=0\,.
$$
The first equality is obvious since $Y$ is $\scrL$-periodic, while the second follows from Lemma \ref{L-DetJac}.

With formula (\ref{eq:TaylorIntr}), we see that
\be\lb{F0NearI}
F_0(I+\eps N)=\tfrac{5}{24\sqrt{3}}+\tfrac5{24\sqrt{3}}\eps^2\Tr(N)^2+\tfrac1{16\sqrt{3}}\eps^2\Tr(N^TN)+O(\eps^3)\,.
\ee
Formula (\ref{DefF0}) shows that $F_0\in C^\infty(GL_2(\bR))$ since $F\in C^\infty(GL_2(\bR))$. Then, formula (\ref{F0NearI}) implies that 
\be\lb{D2F0I}
\grad^2F_0(I)\cdot(N,N)\ge\tfrac1{8\sqrt{3}}\Tr(N^TN)\,.
\ee
Henceforth we denote by $\|\cdot\|_2$ the Frobenius norm on $M_2(\bR)$, defined by the formula
$$
\|A\|_2=\Tr(A^TA)^{1/2}\,.
$$
The inequality (\ref{D2F0I}) implies that there exists three positive constants $0<\l\le\L$ and $\rho_0$ such that
\be\lb{CoercF0}
\l\|N\|_2^2\le\grad^2F_0(A)\cdot(N,N)\le\L\|N\|_2^2\,,\quad\hbox{ for all }A\hbox{ such that }\|A-I\|_2<\rho_0
\ee

Choose $G\in C^2(M_2(\bR))$ such that
$$
\|A-I\|_2<\rho_0/2\Rightarrow G(A)=F_0(A)
$$
while
$$
\tfrac12\l\|N\|_2^2\grad^2G(A)\cdot(N,N)\le 2\L\|N\|_2^2\quad\hbox{ for all }A,N\in M_2(\bR)\,.
$$
Instead of (\ref{CauchyPbGFlow}), consider the Cauchy problem
\be\lb{CauchyPbGFlowG}
\d_tX(t,x)=\Div_x(\grad G(\grad_xX(t,x))\,,\qquad X(0,x)=X^{in}(x)\,.
\ee
Let $X$ be the solution of this Cauchy problem.

\smallskip
\noindent
{\it Step 2: Stability in $L^2$.}

Multiplying both sides of (\ref{CauchyPbGFlow}) by $X(t,x)-x$ and integrating over $\Pi$, one finds that
$$
\frac{d}{dt}\tfrac12\int_\Pi|X(t,x)-x|^2dx=-\int_\Pi\Tr(\grad G(\grad_xX(t,x))^T(\grad_xX(t,x)-I))dx\,.
$$
Since $\grad G(I)=0$, for each $M\in M_2(\bR)$, one has
$$
\ba
\grad G(M)=&\int_0^1\Tr((\grad G(I+s(M-I)-\grad G(I))^T(M-I))ds
\\
\ge&\int_0^1\int_0^s\grad^2G(I+s(M-I)\cdot(M-I,M-I))ds
\\
\ge&\tfrac14\l\|M-I\|_2^2\,.
\ea
$$
Hence
$$
\frac{d}{dt}\tfrac12\int_\Pi|X(t,x)-x|^2dx\le-\tfrac14\l\int_\Pi\|\grad_xX(t,x)-I\|_2^2dx\,.
$$
Since $x\mapsto X(t,x)-x$ is $\scrL$-periodic by property (a), we deduce from the Poincar\'e-Wirtinger inequality that 
$$
\int_\Pi|X(t,x)-x|^2dx\le C_P\int_\Pi\|\grad_xX(t,x)-I\|_2^2dx
$$
(denoting by $C_P$ the best constant in the Poincar\'e-Wirtinger inequality). Therefore
$$
\frac{d}{dt}\tfrac12\int_\Pi|X(t,x)-x|^2dx\le-\tfrac14\l C_P\int_\Pi|X(t,x)-x|^2dx\,,
$$
so that
\be\lb{ExpDecay}
\int_\Pi|X(t,x)-x|^2dx\le e^{-C_P\l t/2}\int_\Pi|X^{in}-x|^2dx\,.
\ee

\smallskip
\noindent
{\it Step 3: Uniform stability.}

Next we prove that the solution $X$ of he Cauchy problem (\ref{CauchyPbGFlowG}) remains close enough to the identity map so that $G$ existence and uniqueness for the gradient flow of $\cF$ by showing that, for initial data sufficiently close to 
the identity, it coincides with the gradient flow of $\cG$.

Assume that
$$
\|X^{in}-\id\|_{W^{\sigma,p}(\Pi)}\le\eps_0
$$
with $p>2$ and $1+2/p<\sigma<2$. By the Theorem on page 192 in \cite{A}, there exists $t_0>0$ such that the solution $X$ of the Cauchy problem (\ref{CauchyPbGFlowG}) satisfies\footnote{Theorem on page 192 in \cite{A} considers solutions 
in bounded domains. However, this result is based on abstract results on evolution equations that apply also to the periodic case.}
$$
\|X(t,\cdot)-\id\|_{W^{\sigma,p}(\Pi)}\le 2\eps_0\quad\hbox{ for all }t\in[0,t_0]\,.
$$
Since $(\sigma-1)p>2$, by Sobolev embedding on the $2$-dimensional torus $\Pi$ one has
\be\lb{NearIdInit}
\|X(t,\cdot)-\id\|_{C^{1,\alpha}(\Pi)}\le C\eps_0\quad\hbox{ for all }t\in[0,t_0]\,,
\ee
for some positive $\alpha\equiv\alpha(\sigma,p)$ and $C\equiv C(\alpha,\sigma,p)$. 

Next consider $(\bar x,\bar t)$ with $\bar t\ge t_0$, together with the parabolic cylinder
$$
Q_{t_0}(\bar x, \bar t):=\{(x,t) \in\Pi\times\bR\hbox{ s.t. }t\in[\bar t-t_0, \bar t]\hbox{ and }|x-\bar x|\le\sqrt{t_0}\}.
$$
Let us now compute 
$$
\int_{Q_{t_0}(\bar x,\bar t)}|X(t,x)-x|^2dxdt\le\int_{\bar t-t_0}^{\bar t}\int_{\Pi}|X(t,x)-x|^2dxdt\,.
$$
By \eqref{ExpDecay}
$$
\ba
\int_{\bar t-t_0}^{\bar t}\int_{\Pi}|X(t,x)-x|^2dxdt&\le\int_{\bar t-t_0}^{\bar t}e^{-C_P\l t}\int_{\Pi}|X^{in}(x)-x|^2dxdt
\\
&\le\frac{1}{C_P\l}\|X^{in}-\id\|_2^2\,.
\ea
$$
In particular,
$$
\frac{1}{|Q_{t_0}(\bar x,\bar t)|}\int_{Q_{t_0}(\bar x,\bar t)}|X(t,x)-x|^2dxdt\le\frac{1}{C_P\pi t_0^2}\|X^{in}-\id\|_2^2\le\frac{\eps_0^2}{C_P\pi t_0^2}\,.
$$
Thus, in the parabolic cylinder $Q_{t_0}(\bar x,\bar t)$, the map $X$ is $L^2$-close to the identity map, and we seek to improve this result into a similar statement with the $C^{1,\a}$ instead of $L^2$ topology. This is done by appealing to the 
local regularity theory of parabolic equations. Specifically, we apply the A-caloric approximation argument in \cite{DM}. Since $t_0$ is fixed and $\eps_0$ can be chosen arbitrarily small, given any point $(\hat x,\hat t)\in Q_{t_0/2}(\bar x,\bar t))$
we can apply \cite[Lemma 7.3]{DM} with $M=2$, $\rho=t_0/2$, and $\ell_\rho=0$, to deduce that there exists a vector $\Ga_{\hat x,\hat t}\in\bR^2$ and a positive constant $c$ that is independent of $\hat x$ and $\hat t$ such that
$$
\frac{1}{|Q_r|}\int_{Q_r(\hat x,\hat t)}|\nabla X(t,x)-\Gamma_{\hat x,\hat t}|^2dxdt\le cr^{2\beta}\,,\qquad\hbox{ for all }r\in(0,t_0/4)\,.
$$
This means that $\nabla X$ belongs to a Campanato space, which is known to coincide with the classical H\"older space \cite{Camp}. Thus
$$
\|\grad X-I\|_{C^{0,\beta}(Q_{t_0/2}(\bar x,\bar t))}\le\bar c\,,
$$
with $\bar c$ independent of $\bar x$ and $\bar t$. 

By localization, interpolation with (\ref{ExpDecay}) and Sobolev embedding, we see that
$$
\|X(t,\cdot)-\id\|_{L^\infty(B(\bar x,\sqrt{t_0}/2)}\le C_S(\th)\bar c^\th e^{-(1-\th)C_P\l t/4}\eps^{1-\th}_0\,,\quad t\in[\bar t-t_0,\bar t]
$$
for all $\th\in(\tfrac23,1)$, where $C_S(\th)$ denotes the Sobolev constant for the embedding $W^{\th,4/(2-\th)}(B(\bar x,\sqrt{t_0}/2)\subset L^\infty(B(\bar x,\sqrt{t_0}/2)$.  (Indeed, applying Theorem 6.4.5 (7) in \cite{BergLof} with $s_0=0$, $p_0=2$, 
$s_1=1$ and $p_1=4$ shows that 
$$
\|X(t,\cdot)-\id\|_{W^{\th,4/(2-\th)}(B(\bar x,\sqrt{t_0}/2)}\le \bar c^{\th} e^{-(1-\th)C_P\l t/4}\eps^{1-\th}_0\,,
$$
and $W^{\th,4/(2-\th)}(B(\bar x,\sqrt{t_0}/2)\subset L^\infty(B(\bar x,\sqrt{t_0}/2)$ provided that $\th>1-\th/2$ by Sobolev's embedding theorem.) 

By a classical argument\footnote{Let $f\in C^{1,\b}(B(0,R))$ for some $\b\in(0,1)$. Then 
$$
\|\grad f\|_{L^\infty(B(0,R)}\le\left(\tfrac2\b\right)^{\b/(\b+1)}\|f\|^{1/(\b+1)}_{C^{1,\b}(B(0,R)}\|f\|^{\b/(\b+1)}_{L^\infty(B(0,R)}\,.
$$
Indeed, by the Mean Value Theorem
$$
f(x+h)-f(x)=\grad f(x)\cdot h+(\grad f(x+sh)-\grad f(x))\cdot h
$$
for some $s\in(0,1)$, so that
$$
|\grad f(x)|\le\frac{2\|f\|_{L^\infty(B(0,R)}}{|h|}+|h|^{1+\b}\|\grad f\|_{C^{0,\b}(B(0,R)}\,.
$$
Optimizing in $|h|$ leads to the conclusion.}
$$
\|\grad X(t,\cdot)-I\|_{L^\infty(B(\bar x,\sqrt{t_0}/2)}\le\left(\tfrac2\b\right)^{\frac{\b}{\b+1}}C_S(\th)^{\frac1{1+\b}}\bar c^{\frac{1+\th\b}{1+\b}} e^{-\frac{(1-\th)\b C_P\l t}{4(1+\b)}}\eps^{\frac{\b(1-\th)}{1+\b}}_0\,,\quad |\bar t-t|\le t_0\,.
$$
With (\ref{NearIdInit}), this implies that, for all $t\ge 0$, one has
$$
\|\grad X(t,\cdot)-I\|_{L^\infty(\Pi)}\le\max\left(C\eps_0,\left(\tfrac2\b\right)^{\frac{\b}{\b+1}}C_S(\th)^{\frac1{1+\b}}\bar c^{\frac{1+\th\b}{1+\b}} e^{-\frac{(1-\th)\b C_P\l t}{4(1+\b)}}\eps^{\frac{\b(1-\th)}{1+\b}}_0\right)\,.
$$

\smallskip
\noindent
{\it Step 4: Conclusion.}

By choosing $\eps_0$ small enough, we conclude that
$$
\|\grad X(t,\cdot)-I\|_{L^\infty(\Pi)}\le\rho_0/2\,,\qquad t\ge 0\,,
$$
so that
$$
\grad G(\grad_xX(t,x))=\grad F_0(\grad_xX(t,x))\,,\quad t\ge 0\,,\,\,x\in\Pi\,.
$$
Hence $X$ satisfies
$$
\d_tX(t,x)=\Div_x(\grad F_0(\grad_xX(t,x))\,,\quad t>0\,,\,\,x\in\Pi\,.
$$
On the other hand, (\ref{DefF0}) implies that
$$
(\grad F(M)-\grad F_0(M))\cdot N=\tfrac{1}{8\sqrt{3}}\Tr(N)+\tfrac{7}{24\sqrt{3}}\Det(M)\Tr(M^{-1}N)
$$
so that
$$
\grad F(\grad_xX(t,x))-\grad F_0(\grad_xX(t,x))=\tfrac1{8\sqrt{3}}I+\tfrac{7}{24\sqrt{3}}JX(t,x)(\grad_xX(t,x)^{-1})^T\,.
$$
Since
$$
JX(\grad_xX^{-1})^T=
\left(\begin{matrix}
\d X_2/\d x_2 & -\d X_2/\d x_1
\\
-\d X_1/\d x_2&\d X_1\d x_1
\end{matrix}\right)
$$
one has $\Div_x(JX(\grad_xX^{-1})^T)=0$, so that
$$
\Div_x\grad F(\grad_xX(t,x)))=\Div_x(\grad F_0(\grad_xX(t,x)))\,.
$$
In other words, $X$ is in fact the solution of (\ref{CauchyPbGFlow}). 

Finally, for each $\th\in(\tfrac23,1)$, one has
$$
|X(t,\bar x)-\id|\le \max(2,C_S(\th)\bar c^\th)e^{-(1-\th)C_P\l(t-t_0)/4}\eps^{1-\th}_0\,,\quad t>0\,,
$$
and the proof is complete.


\bigskip

{\it Acknowledgments:}  The first author is grateful to Riccardo Salvati Manni for useful discussions on section \ref{S5}. The third author is grateful to Giuseppe Mingione for useful suggestions about regularity theory. Moreover, the third author 
would like to acknowledge the L'Or\'eal Foundation for partially supporting this project by awarding the author with the L'Or\'eal-UNESCO \emph{For Women in Science France fellowship}.


\end{document}